\theoremstyle{plain}
\newtheorem{theorem}{Theorem}[section]
\newtheorem{lemma}[theorem]{Lemma}
\newtheorem{proposition}[theorem]{Proposition}
\newtheorem{corollary}[theorem]{Corollary}
\theoremstyle{definition}
\newtheorem{defn}{Definition}
\newtheorem{assumption}{Assumption}
\theoremstyle{remark}
\newtheorem{remark}{Remark}
\numberwithin{equation}{section}
\newcommand{\J}{\mathcal{J}}
\newcommand{\R}{\mathbb{R}}
\newcommand{\F}{\mathcal{F}}
\newcommand{\U}{\mathcal{U}}
\newcommand{\e}{\varepsilon}
\newcommand{\weak}{\rightharpoonup}
\newcommand{\NN}{\mathbb{N}}
\newcommand{\xth}{x^\theta}
\newcommand{\X}{\mathcal{X}}
\newcommand{\Rext}{\mathbb{R}\cup\{+\infty\}}
\newcommand{\Next}{\NN\cup\{+\infty\}}
\begin{document}
\title[Minimax problems for ensembles of control-affine systems]
{Minimax problems for ensembles of control-affine systems}
\author[A. Scagliotti]{Alessandro Scagliotti}

\begin{abstract}
In this paper, we consider ensembles of control-affine systems in $\mathbb{R}^d$, and we study simultaneous optimal control problems related to the worst-case minimization.
After proving that such problems admit solutions, denoting with $(\Theta^N)_N$ a sequence of compact sets that parametrize the ensembles of systems, we first show that the corresponding minimax optimal control problems are $\Gamma$-convergent whenever $(\Theta^N)_N$ has a limit with respect to the Hausdorff distance.
Besides its independent interest, the previous result plays a crucial role for establishing the Pontryagin Maximum Principle (PMP) when the ensemble is parametrized by a set $\Theta$ consisting of infinitely many points.
Namely, we first approximate $\Theta$ by finite and increasing-in-size sets $(\Theta^N)_N$ for which the PMP is known, and then we derive the PMP for the $\Gamma$-limiting problem.
The same strategy can be pursued in applications, where we can reduce infinite ensembles to finite ones to compute the minimizers numerically. 
We bring as a numerical example the Schr\"odinger equation for a qubit with uncertain resonance frequency.
\end{abstract}

\maketitle

In this paper, we consider ensembles of control-affine systems in $\R^d$ of the form:
\begin{equation}\label{eq:intro_ens}
\begin{cases}
\dot x(t) = F_0^\theta(x(t)) + F^\theta(x(t))u(t)&
\mbox{a.e. in }[0,T],\\
x(0)=x_0^\theta,
\end{cases}
\end{equation}
where $F_0^\theta:\R ^d\to\R^d$ and $F^\theta:\R ^d\to\R^{d\times k}$ are Lipschitz-continuous mappings that define the controlled vector fields, and $u\in L^p([0,T],\R^k)$ (with $1<p<\infty$) represents the control.
In \eqref{eq:intro_ens}, $\theta$ is an unknown parameter that affects the dynamics and the Cauchy datum. We assume that $\theta$ varies in a compact set $\Theta$ embedded in some Euclidean space, and it parametrizes the systems of the ensemble.
We insist on the fact that, in our framework, the control $u:[0,T]\to\R^k$ is \emph{simultaneously} driving every element of the ensemble.

This standpoint is widely adopted in applications, typically when the controlled evolution depends on quantities obtained through (imprecise) measurements, or subject to statistical noise \cite{LK06, RuLi12}. In these circumstances, the user is interested in coming up with a single control that is suitable for every scenario (or many of them).
Also in linear-state systems, the controllability of ensembles is a source of inspiring questions, see \cite{LoZu16} and the more recent contributions \cite{DS21, Dan22, Schoe23}.
For ensembles of linear-control systems, in \cite{ABS16, AS1} Lie-algebraic arguments were employed to establish approximability results.
Moreover, ensembles are often considered in quantum control, see for example \cite{BCR10,BST15,AuBoSi18,ChiGau18} and the recent contribution \cite{RABS}, where the Authors address the control of a \emph{qubit} whose resonance frequence is not precisely known and ranges on an interval.
In addition, it is worth mentioning that ensembles of controls systems have been playing a central role in the mathematical modeling of Machine Learning.
Indeed, on the one hand, when considering \emph{neural ODEs}, the parameter $\theta$ is not directly influencing the dynamics, but it rather describes the elements of the \emph{data-set}, which are modeled as the Cauchy data for \eqref{eq:intro_ens} to be simultaneously steered to the corresponding targets (or \emph{labels}).
The controllability and the optimal control problems have  been investigated, respectively, in \cite{AS2} and \cite{Scag22}, while in \cite{CFS24} the mean-field perspective has been adopted, and in \cite{GZ22} the turnpike property of these systems was studied.
For a survey on neural ODEs, we recommend \cite{RuZu23}.
On the other hand, it has been shown that \emph{Reinforcement Learning} is intimately related to optimal control problems whose dynamics is partially unknown (see, e.g., \cite{MP18,PPF21}).
Finally, we mention that, in the very recent contributions \cite{GLPR23,AL24}, interactions between the evolving data-points have been introduced, in order to model the \emph{self-attention} mechanism of \emph{Transformers}.

In the present work, we consider ensembles of the form \eqref{eq:intro_ens} and we study minimax optimal control problems that can be formulated as follows:
\begin{equation}\label{eq:intro_minimax}
\begin{split}
&\min_{u\in\U} \J_\Theta(u),\\
\J_\Theta(u):= \max_{\theta\in\Theta} \, &a(x^\theta(T),\theta) + \int_0^T f(t,u(t))\,dt,
\end{split}
\end{equation}
where $x^\theta:[0,T]\to\R^d$ is the solution of \eqref{eq:intro_ens} corresponding to the parameter $\theta\in\Theta$ and to the control $u\in L^p([0,T],\R^k)$, while $a:\R^d\times \Theta\to[0,+\infty)$ is the end-point cost, and $f:[0,T]\times \R^k\to\R$ is a convex function designing the running cost.
In this paper, we aim at providing a thorough analysis of \eqref{eq:intro_minimax}. Namely, in Section~\ref{sec:existence} we first address the existence of minimizers using the direct method of Calculus of Variations (see Proposition~\ref{prop:existence}), and in Section~\ref{sec:Gamma} we employ $\Gamma$-convergence arguments to study the stability of the solutions of \eqref{eq:intro_minimax} when the set of parameters $\Theta$ is perturbed.
To this end, we use a strategy similar to the one followed in \cite{Scag23} for \emph{averaged} ensemble optimal control problems.
More precisely, given a sequence of compact sets $\Theta^1,\ldots,\Theta^\infty$ such that the Hausdorff distance $d_H(\Theta^N,\Theta^\infty)\to 0$ as $N\to\infty$, we establish a $\Gamma$-convergence result for the minimax problems associated to $\Theta^N$, $N\geq 1$ (see Theorem~\ref{thm:G_conv}).
In virtue of this, denoting with $\hat u_N$ a minimizer of $\J_{\Theta^N}$, it turns out that the sequence $(u_N)_{N}$ is pre-compact in the \emph{strong} topology of $L^p$, and every limiting point is a minimizer for $\J_{\Theta^\infty}$ (see Corollary~\ref{cor:strong_conv}).
On this point, we stress that, since we establish the $\Gamma$-convergence with respect to the weak topology of $L^p$, the standard theory would imply the pre-compactness of the minimizers in the $L^p$-\emph{weak} topology. However, using the celebrated result in \cite{Vis84}, we manage to strengthen the pre-compactness of the minimizers, whenever the running cost $f:[0,T]\times \R^k\to \R$ in \eqref{eq:intro_minimax} is strictly convex in the second variable. 
We report that a similar phenomenon has been observed in \cite{Scag22,Scag23} for the particular cost $f(u)=\beta |u|_2^2$, with $\beta>0$.
Finally, we mention that in \cite{PRG} the Authors proved a $\Gamma$-convergence result by working from the beginning with the \emph{strong topology}, in the framework of averaged ensemble optimal control problems. In this case, however, proving that functionals of the $\Gamma$-converging sequence are coercive in the underlying topology could be in general a hard task. \\
The $\Gamma$-convergence approach mentioned above encompasses as well the case where we approximate an infinite set of parameters $\Theta$ with a sequence of finite and increasing-in-size subsets.
This method of reducing to finite ensembles will be crucial for computing numerical solutions of \eqref{eq:intro_minimax}.
Moreover, in Section~\ref{sec:PMP}, this strategy is the cornerstone for deriving the necessary optimality conditions for the minimizers of $\J_\Theta$ in form of Pontryagin Maximum Principle (PMP), when the ensemble consists of infinitely many systems (see Theorem~\ref{thm:PMP_infinite}). 
We report that similar arguments ---though in a less general setting--- have been employed in \cite{Scag23}, where the final-time cost was \emph{averaged} over the elements of $\Theta$, and no maximization on the elements of the ensemble was involved.
In this regard, our result completes the minimax PMP for infinite ensembles established in \cite{V05}, where at every point the set of admissible velocities was supposed to be bounded, so that control-affine systems were not covered by the analysis.
Moreover, our techniques, which heavily rely on the fact that the dynamics is affine in the controls, differ from the ones adopted in \cite{V05}, based on Ekeland's Theorem. Finally, we mention that more recently the arguments of \cite{V05} were adapted to the \emph{averaged} cost case in \cite{BK19,BK24}, respectively without and with state-constraints.

\section{Preliminaries} \label{sec:prel}
Given a compact set $\Theta\subset \R^l$ and a time horizon $[0,T]$, we consider the following family of control systems in $\R^d$ parametrized by $\theta\in\Theta$: 
\begin{equation}\label{eq:ens_ctrl_sys}
\begin{cases}
\dot x^\theta (t) = F_0^\theta(\xth(t)) + F^\theta(\xth(t))u(t)&
\mbox{for a.e. }t\in [0,T],\\
\xth(0) = x_0(\theta),
\end{cases}
\end{equation}
where $F_0^\theta:\R^d\to\R^d$, $F^\theta=(F^\theta_1,\ldots,F^\theta_k):\R^d\to \R^{d\times k}$ define the vector fields of the ensemble of control-affine systems, and $x_0:\Theta\to \R^d$ prescribes the Cauchy data for the evolutions.
Using the same notations as in \cite{Scag23}, we introduce $F_0:\R^d\times\Theta\to\R^d$ and $F:\R^d\times\Theta\to\R^{d\times k}$ as:
\begin{equation}\label{eq:def_fields_maps}
F_0(x,\theta) := F_0^\theta(x), \qquad
F(x,\theta) = \big(F_1(x,\theta),\ldots,F_k(x,\theta) \big):= \big( F^\theta_1(x),\ldots,F_2^\theta(x) \big)
\end{equation}
for every $\theta\in\Theta$ and $x\in\R^d$. We require $F_0,F$ to satisfy the assumption below.
\begin{assumption}\label{ass:fields}
The applications $F_0$ and $F=(F_1,\ldots,F_k)$ defined as in \eqref{eq:def_fields_maps} are globally Lipschitz continuous, i.e.,  there exists $L>0$ such that 
\begin{equation*}
\sup_{i=0,\ldots,k} |F_i(x_1,\theta_1) - F_i(x_2,\theta_2)|_2 \leq 
L\big( |x_1-x_2|_2 + |x_1+x_2|_2 |\theta_1 - \theta_2|_2   \big)
\end{equation*}
for every $(x_1,\theta_1),(x_2,\theta_2)\in \R^d\times\Theta$.
\end{assumption}
We also make the following hypothesis on the function $x_0$.
\begin{assumption}\label{ass:cauchy}
The function $x_0:\Theta\to\R^d$ prescribing the initial conditions for the ensemble \eqref{eq:ens_ctrl_sys} is continuous.
\end{assumption}

For every $u\in L^1([0,T],\R^k)$ and $\theta\in\Theta$, the curve $x_u^\theta:[0,T]\to\R^d$ denotes the solution of the Cauchy problem \eqref{eq:ens_ctrl_sys} corresponding to the system identified by $\theta$ and to the control $u$. The existence of such a trajectory is guaranteed by the classical Carath\'eodory Theorem (see \cite[Theorem~5.3]{H80}).
We consider $\U:=L^p([0,1],\R^k)$ with $1<p<\infty$ as the space of  controls, and we equip it with the usual Banach space structure.
Given $u\in\U$, we describe the evolution of the ensemble of control systems \eqref{eq:ens_ctrl_sys} through the mapping $X_u:[0,T]\times\Theta\to\R^d$ defined as follows:
\begin{equation} \label{eq:def_evol_ens}
X_u(t,\theta) := x_u^\theta(t)
\end{equation}
for every $(t,\theta)\in[0,T]\times\Theta$.
With an argument based on the Gr\"onwall Lemma, we can prove the following result.

\begin{lemma} \label{lem:bound_X}
For every $u\in\U$,
let $X_u:[0,T]\times\Theta \to \R^d$ be the
application defined in \eqref{eq:def_evol_ens}
collecting the trajectories of the ensemble of
control systems \eqref{eq:ens_ctrl_sys}.
Then, for every $R>0$ there exists 
$\bar R>0$ such that, if $\|u\|_{L^p}\leq R$,
we have 
\begin{equation} \label{eq:bound_X}
|X_u(t,\theta)|_2\leq \bar R,
\end{equation}
for  every $(t,\theta)\in[0,T]\times \Theta$.
\end{lemma}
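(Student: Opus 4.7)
\medskip
\noindent\textbf{Proof plan.} The plan is to derive an affine-in-state growth bound for the vector fields, rewrite \eqref{eq:ens_ctrl_sys} in integral form, and then apply Grönwall's inequality with an $L^1$ weight built from the $L^p$ control.

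First, from Assumption~\ref{ass:fields}, taking $x_2=0$ and $\theta_1=\theta_2=\theta$, for every $i=0,\ldots,k$ one obtains $|F_i(x,\theta)|_2\leq L|x|_2+|F_i(0,\theta)|_2$. The map $\theta\mapsto F_i(0,\theta)$ is continuous on the compact set $\Theta$, so there exists a constant $M_F>0$ with $|F_i(0,\theta)|_2\leq M_F$ for all $\theta\in\Theta$ and all $i$. Thus
\begin{equation*}
|F_0(x,\theta)|_2 + \sum_{j=1}^k |F_j(x,\theta)|_2 \leq C\bigl(1+|x|_2\bigr)
\end{equation*}
for some constant $C>0$ depending on $L$, $M_F$ and $k$. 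In addition, by Assumption~\ref{ass:cauchy} and the compactness of $\Theta$, the quantity $M_0:=\max_{\theta\in\Theta}|x_0(\theta)|_2$ is finite.

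Next, fix $u\in\U$ with $\|u\|_{L^p}\leq R$ and $\theta\in\Theta$. Writing \eqref{eq:ens_ctrl_sys} in integral form and using the triangle inequality together with the compatibility of $|\cdot|_2$ with the matrix-vector product, I obtain
\begin{equation*}
|X_u(t,\theta)|_2 \leq M_0 + \int_0^t \bigl(|F_0(X_u(s,\theta),\theta)|_2 + |F(X_u(s,\theta),\theta)|_{\mathrm{op}}\,|u(s)|_2\bigr)\,ds.
\end{equation*}
Invoking the growth bound from the first step and setting $\phi(s):=1+|u(s)|_2$, this yields
\begin{equation*}
|X_u(t,\theta)|_2 \leq M_0 + C\int_0^t \phi(s)\,ds + C\int_0^t \phi(s)\,|X_u(s,\theta)|_2\,ds.
\end{equation*}
By Hölder's inequality, $\int_0^T |u(s)|_2\,ds \leq T^{1-1/p}\|u\|_{L^p}\leq T^{1-1/p}R$, so $\|\phi\|_{L^1}$ is bounded by a constant depending only on $R$, $T$ and $p$.

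Finally, applying Grönwall's lemma to the scalar function $t\mapsto |X_u(t,\theta)|_2$ with the $L^1$-weight $C\phi$, I conclude
\begin{equation*}
|X_u(t,\theta)|_2 \leq \bigl(M_0 + C\|\phi\|_{L^1}\bigr)\exp\bigl(C\|\phi\|_{L^1}\bigr) =: \bar R,
\end{equation*}
where $\bar R$ is independent of $t\in[0,T]$ and $\theta\in\Theta$, and depends on $R$ only through $\|\phi\|_{L^1}$. This is exactly \eqref{eq:bound_X}. The argument is essentially routine; the only point requiring slight care is ensuring that all constants are genuinely uniform in $\theta\in\Theta$, which is handled by the compactness of $\Theta$ combined with the joint continuity/Lipschitzianity of the data in Assumptions~\ref{ass:fields} and \ref{ass:cauchy}.
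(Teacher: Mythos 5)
Your proof is correct and follows exactly the route the paper intends: the paper itself only cites \cite[Lemma~A.2]{Scag23}, announcing beforehand that the result rests on ``an argument based on the Gr\"onwall Lemma,'' which is precisely your affine growth bound plus H\"older plus Gr\"onwall with the $L^1$ weight $C(1+|u(\cdot)|_2)$. All constants are indeed uniform in $\theta$ by the compactness of $\Theta$ as you note (in fact, Assumption~\ref{ass:fields} with $x_1=x_2=0$ even forces $\theta\mapsto F_i(0,\theta)$ to be constant), so nothing is missing.
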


\begin{proof}
See \cite[Lemma~A.2]{Scag23}.
\end{proof}

The next convergence result is the backbone of several proofs in the present paper.

\begin{proposition} \label{prop:unif_conv_map_X}
Let us consider a sequence of  controls
$(u_n)_{n\in\NN} \subset \U$
such that $u_n\weak_{L^p}u_\infty$ as
$n\to\infty$.
For every $n\in \NN \cup \{ \infty\}$,
let $X_n:[0,T]\times \Theta\to \R^d$
be the application defined in
\eqref{eq:def_evol_ens} that collects the
trajectories of the ensemble of control
systems \eqref{eq:ens_ctrl_sys} 
corresponding to the  control $u_n$.
Then, we have that the family $(X_n)_{n\in\NN \cup\{\infty\}}$ is equi-bounded and equi-uniformly continuous, and
\begin{equation} \label{eq:unif_conv_map_X}
\lim_{n\to\infty}\, \sup_{(t,\theta)\in [0,T]\times\Theta} |X_n(t,\theta) - X_\infty(t,\theta)|_2
=0. 
\end{equation}
\end{proposition}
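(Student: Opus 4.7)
The plan is to handle the three conclusions in the order they are stated, since each one feeds into the next.

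First, the weak convergence $u_n \weak_{L^p} u_\infty$ gives a uniform bound $\|u_n\|_{L^p} \le R$ for every $n \in \NN \cup \{\infty\}$, and Lemma~\ref{lem:bound_X} then produces $\bar R$ such that $|X_n(t,\theta)|_2 \le \bar R$ for every $n,t,\theta$. Specialising Assumption~\ref{ass:fields} with $x_2=0$ at a fixed reference parameter yields the linear-growth bound $|F_i(x,\theta)|_2 \le C(1+|x|_2)$ on $\R^d\times\Theta$, so H\"older's inequality applied to the Volterra form of \eqref{eq:ens_ctrl_sys} gives
\begin{equation*}
|X_n(t_2,\theta)-X_n(t_1,\theta)|_2 \le C_1 |t_2-t_1| + C_2 R\, |t_2-t_1|^{1/q},
\end{equation*}
where $q$ is the conjugate exponent of $p$; this is an equi-modulus of continuity in $t$. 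For the $\theta$-direction, I would combine Assumption~\ref{ass:fields} (splitting off a term $2L\bar R|\theta_1-\theta_2|_2$) with the uniform continuity of $x_0$ from Assumption~\ref{ass:cauchy} and apply Gr\"onwall, the exponential factor being tamed by the uniform estimate $\|u_n\|_{L^1} \le T^{1/q}R$.

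Next, set $E_n(t,\theta) := |X_n(t,\theta)-X_\infty(t,\theta)|_2$. Subtracting the integral equations and decomposing $F(X_n,\theta)u_n - F(X_\infty,\theta)u_\infty = (F(X_n,\theta)-F(X_\infty,\theta))u_n + F(X_\infty,\theta)(u_n-u_\infty)$ leads to
\begin{equation*}
E_n(t,\theta) \le A_n(t,\theta) + L\int_0^t (1+|u_n(s)|)E_n(s,\theta)\, ds, \quad A_n(t,\theta) := \Big| \int_0^t F(X_\infty(s,\theta),\theta)(u_n(s)-u_\infty(s))\, ds\Big|_2,
\end{equation*}
and Gr\"onwall together with the uniform $L^1$-bound on $u_n$ yields a constant $C>0$ such that $E_n(t,\theta) \le C \sup_{s \in [0,T]} A_n(s,\theta)$. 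Uniform convergence is therefore reduced to
\begin{equation*}
\sup_{(t,\theta)\in[0,T]\times\Theta} A_n(t,\theta) \longrightarrow 0.
\end{equation*}

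Finally, I would introduce $g_{t,\theta}(s) := \chi_{[0,t]}(s) F(X_\infty(s,\theta),\theta) \in L^q([0,T],\R^{d\times k})$. Applying the equi-uniform continuity already obtained to $n=\infty$ makes $X_\infty$ jointly uniformly continuous, and combining this with the Lipschitz condition on $F$ and the elementary $L^q$-estimate for $t \mapsto \chi_{[0,t]}$ shows that $(t,\theta)\mapsto g_{t,\theta}$ is continuous from $[0,T]\times\Theta$ into $L^q$; its image $K$ is thus norm-compact in $L^q$. The uniform bound $\|u_n - u_\infty\|_{L^p}\le 2R$ makes the linear functionals $g \mapsto \int_0^T (u_n-u_\infty)\cdot g\,ds$ equicontinuous on $L^q$, and by weak convergence they tend to $0$ pointwise, hence uniformly on the compact set $K$. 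This is exactly the required uniform smallness of $A_n$. The main obstacle is precisely this last step: promoting the pointwise weak convergence of controls to uniform smallness over $(t,\theta)$ depends on the strong $L^q$-compactness of the parameter family $\{g_{t,\theta}\}$, which is only available because the equi-uniform continuity was established beforehand---so the ordering of the three conclusions is essential.
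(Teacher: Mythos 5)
Your proof is correct and self-contained, but it takes a different route from the paper, which does not actually prove Proposition~\ref{prop:unif_conv_map_X} in situ: it delegates the equi-boundedness to Lemma~\ref{lem:bound_X} and the equi-uniform continuity and the convergence \eqref{eq:unif_conv_map_X} to \cite[Lemma~A.5]{Scag23} and \cite[Proposition~2.2]{Scag23}, noting afterwards that those arguments hinge on the reflexivity of $\U$ and of $W^{1,p}([0,T],\R^d)$ and on the compact embedding $W^{1,p}([0,T],\R^d)\hookrightarrow C^0([0,T],\R^d)$ --- i.e.\ on a compactness/subsequence-extraction scheme for the trajectories. You instead argue directly: after the (standard and correct) Gr\"onwall reductions, you reduce \eqref{eq:unif_conv_map_X} to the uniform smallness of the pairings $\int_0^T (u_n-u_\infty)\cdot g_{t,\theta}\,ds$ over the family $\{g_{t,\theta}\}$, and you obtain this from the norm-compactness of that family in $L^q$ together with the equi-Lipschitz character of the functionals $g\mapsto\int(u_n-u_\infty)\cdot g$. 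This is the same underlying principle (weak convergence is uniform on norm-compact subsets of the predual) but packaged as a direct, quantitative estimate with no subsequence extraction, which also makes transparent why the equi-uniform continuity must be established \emph{before} the convergence --- a dependency the paper's terse citation hides. All the individual steps check out: the linear-growth bound from Assumption~\ref{ass:fields}, the H\"older estimate giving the $|t_2-t_1|^{1/q}$ modulus, the $2L\bar R|\theta_1-\theta_2|$ splitting plus uniform continuity of $x_0$ for the $\theta$-modulus, and the decomposition $F(X_n)u_n-F(X_\infty)u_\infty=(F(X_n)-F(X_\infty))u_n+F(X_\infty)(u_n-u_\infty)$ feeding Gr\"onwall. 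Your version would serve as a complete replacement for the outsourced proof.
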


\begin{proof}
The equi-boundedness follows from Lemma~\ref{lem:bound_X}, while the equi-uniform continuity and the convergence descend from \cite[Lemma~A.5]{Scag23} and \cite[Proposition~2.2]{Scag23}, respectively.
\end{proof}

We report that in the proofs detailed in \cite{Scag23} the space of  controls $\U$ is set to be $L^2$. However, the arguments employed do not make use of the Hilbert space structure of $\U$, and they can be \textit{verbatim} generalized to the case $\U=L^p$ with $1<p<\infty$. 
Indeed, the crucial aspect lies in the reflexivity of $\U$ and of $W^{1,p}([0,T],\R^d)$ (i.e., the space where the trajectories solving \eqref{eq:ens_ctrl_sys} live), and the fact that $W^{1,p}([0,T],\R^d)$ is compactly embedded in $C^0([0,T],\R^d)$. 

\section{Minimax optimal control problem: existence result} \label{sec:existence}

In this paper, we consider the functional $\J:\U\to \Rext$ defined as
\begin{equation}\label{eq:def_fun_minimax}
\J(u) := \max_{\theta \in\Theta} a(X_u(T,\theta),\theta) + \int_0^T f(t,u(t))\, dt
\end{equation}
for every $u\in\U$. We require $a:\R^d\times \Theta \to \R_+$ and $f:[0,T]\times \R^k\to\R$ to satisfy the following assumptions.

\begin{assumption}\label{ass:terminal_cost}
The function $a:\R^d\times \Theta \to \R_+$ defining the terminal cost is continuous and non-negative.
\end{assumption}

\begin{assumption}\label{ass:running_cost}
The function $f:[0,T]\times \R^k \to \R$ defining the integral cost is such that:
\begin{enumerate}
\item[(i)] $f(\cdot,u)$ is measurable for every $u\in\R^k$.
\item[(ii)] $f(t,\cdot)$ is continuous and convex for a.e. $t\in [0,T]$.
\item[(iii)] There exist $c\in L^1([0,T],\R_+)$, $C>0$ and $p\in(1,\infty)$ such that 
\begin{equation} \label{eq:growth_f}
f(t,u) \geq C|u|^p_2 - c(t)
\end{equation}
for a.e. $t\in[0,T]$ and for every $u\in\R^k$.
\item[(iv)] There exist $u'\in\R^k$ and $c'\in L^1([0,T],\R_+)$ so that $|f(t,u')|\leq c'(t)$ for a.e. $t\in [0,T]$.
\end{enumerate}
\end{assumption}

\begin{remark}\label{rmk:exponent_space}
As mentioned in the previous section, we set $\U:=L^p([0,T],\R^k)$, where the exponent $p\in(1,\infty)$ matches with \eqref{eq:growth_f} in Hypothesis~\ref{ass:running_cost}. Indeed, this guarantees that the functional $\J$ is weakly coercive.
Moreover, we observe that the condition expressed in Hypothesis~\ref{ass:running_cost}-(iv) is a weaker relaxation of the more classical bound $|f(t,u)|\leq C''|u|_2^p + c''(t)$ for every $u\in\R^k$ and a.e. $t\in[0,T]$, with $C''>0$ and $c''\in L^1([0,T],\R_+)$. We recall that this last growth bound is necessary and sufficient for having that $t\mapsto f(t,u(t))\in L^1([0,T],\R)$ for every $u(\cdot)\in L^p([0,T],\R^k)$.
For more details, see \cite[Corollary~6.46]{FL07}.
\end{remark}

\begin{lemma}\label{lem:well_defined}
Under Hypotheses~\ref{ass:fields}-\ref{ass:running_cost} and setting $\U:=L^p([0,T],\R^k)$, we have that the functional $\J:\U\to\Rext$ introduced in \eqref{eq:def_fun_minimax} is well-defined at every $u\in\U$, and $\J\not\equiv+\infty$. 
\end{lemma}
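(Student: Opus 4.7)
The plan is to separately analyze the two summands in the definition \eqref{eq:def_fun_minimax} and show that the first is always a non-negative finite real number, while the second is always well-defined as an element of $\Rext$. Once this is done, well-definedness of $\J(u)\in\Rext$ for every $u\in\U$ follows, and the non-triviality of $\J$ will be obtained by plugging in the constant control $u\equiv u'$ provided by Hypothesis~\ref{ass:running_cost}-(iv).

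For the terminal cost $\max_{\theta\in\Theta}a(X_u(T,\theta),\theta)$, I would first observe that Proposition~\ref{prop:unif_conv_map_X} applied to the stationary sequence $u_n\equiv u$ implies that $X_u$ is uniformly continuous on $[0,T]\times\Theta$. Combining this with the continuity of $a$ from Hypothesis~\ref{ass:terminal_cost}, the map $\theta\mapsto a(X_u(T,\theta),\theta)$ is continuous on the compact set $\Theta$. Hence the maximum is attained at some $\theta^*\in\Theta$ and, by non-negativity of $a$, belongs to $[0,+\infty)$.

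For the integral term, the Carath\'eodory nature of $f$ (measurability in $t$ by Hypothesis~\ref{ass:running_cost}-(i), continuity in $u$ by (ii)) together with measurability of $u$ ensures that $t\mapsto f(t,u(t))$ is measurable. The lower bound \eqref{eq:growth_f} in Hypothesis~\ref{ass:running_cost}-(iii) gives
\[
f(t,u(t))\geq C|u(t)|_2^p-c(t)\geq -c(t)\qquad\text{for a.e. }t\in[0,T],
\]
with $c\in L^1([0,T],\R_+)$, so the negative part of $f(\cdot,u(\cdot))$ lies in $L^1$ and the integral $\int_0^T f(t,u(t))\,dt$ is well-defined in $\Rext$. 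Adding the two contributions yields $\J(u)\in\Rext$.

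To establish $\J\not\equiv +\infty$, I would consider the constant control $u\equiv u'$, which is in $\U$ since $[0,T]$ has finite measure. The terminal cost is finite by the argument above, while Hypothesis~\ref{ass:running_cost}-(iv) directly gives $\int_0^T|f(t,u')|\,dt\leq\|c'\|_{L^1}<+\infty$; therefore $\J(u')<+\infty$. There is no real obstacle here: the only point requiring some care is not confusing the integral being ``well-defined in $\Rext$'' (which needs only the one-sided bound on $f$) with being finite (which requires Hypothesis~\ref{ass:running_cost}-(iv) and is used only to exhibit a single control with finite cost).
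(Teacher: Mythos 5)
Your proof is correct and follows essentially the same route as the paper: finiteness of the terminal term via boundedness/continuity of the trajectories, well-definedness of the integral in $\Rext$ from the one-sided growth bound of Hypothesis~\ref{ass:running_cost}-(iii), and non-triviality via the constant control $u\equiv u'$ from Hypothesis~\ref{ass:running_cost}-(iv). The only difference is that you supply slightly more detail (measurability of $t\mapsto f(t,u(t))$ and attainment of the maximum via continuity of $\theta\mapsto X_u(T,\theta)$), where the paper simply invokes Lemma~\ref{lem:bound_X} and the continuity of $a$.
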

\begin{proof}
We first observe that, owing to Lemma~\ref{lem:bound_X} and Hypothesis~\ref{ass:terminal_cost}, the first summand at the right-hand side of \eqref{eq:def_fun_minimax} is finite, for every $u\in\U$. 
Moreover, the growth condition in Hypothesis~\ref{ass:running_cost}.iii ensures that the integral term is never equal to $-\infty$ for $u\in\U$ (even if it can be equal to $+\infty$).
Finally, in virtue of Hypothesis~\ref{ass:running_cost}.iv, we consider the control $\tilde u(t)= u'$ for every $t\in[0,T]$, and we obtain that $\J(\tilde u)<+\infty$.
\end{proof}

We first establish an ancillary result concerning the terminal-time cost at the right-hand side of \eqref{eq:def_fun_minimax}.

\begin{lemma}\label{lem:conv_maxima}
Under Hypotheses~\ref{ass:fields}-\ref{ass:terminal_cost} and setting $\U:=L^p([0,T],\R^k)$ with $1<p<\infty$, let us consider a sequence of  controls $(u_n)_{n\in\NN} \subset \U$ such that $u_n\weak_{L^p}u_\infty$ as $n\to\infty$.
For every $n\in \NN \cup \{ \infty\}$, let $X_n:[0,T]\times \Theta\to \R^d$ be the application defined in \eqref{eq:def_evol_ens} that collects the trajectories of the ensemble of control systems \eqref{eq:ens_ctrl_sys} corresponding to the  control $u_n$.
Then,
\begin{equation} \label{eq:lim_max_conv}
\lim_{n\to\infty}\, \max_{\theta \in\Theta} a(X_n(T,\theta),\theta) = \max_{\theta \in\Theta} a(X_\infty(T,\theta),\theta).
\end{equation}
\end{lemma}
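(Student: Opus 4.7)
The plan is to combine the uniform convergence $X_n \to X_\infty$ on $[0,T]\times\Theta$ coming from Proposition~\ref{prop:unif_conv_map_X} with a uniform-continuity argument for $a$ on a suitable compact set, and then exploit the standard fact that uniform convergence of continuous functions on a compact set implies convergence of the maxima.

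More concretely, first I would use Lemma~\ref{lem:bound_X}: since $(u_n)_{n\in\NN}$ is weakly convergent in $L^p$, it is norm-bounded, so there exists $\bar R>0$ such that $|X_n(T,\theta)|_2 \leq \bar R$ for every $n\in\NN\cup\{\infty\}$ and every $\theta\in\Theta$. Let $K := \overline{B_{\bar R}(0)} \times \Theta \subset \R^d\times\R^l$; this set is compact, and by Assumption~\ref{ass:terminal_cost} the map $a$ is continuous on $\R^d\times\Theta$, hence uniformly continuous on $K$. Thus, for every $\e>0$ there exists $\delta>0$ such that $|a(y_1,\theta)-a(y_2,\theta)|<\e$ whenever $(y_1,\theta),(y_2,\theta)\in K$ and $|y_1-y_2|_2<\delta$.

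Next, by Proposition~\ref{prop:unif_conv_map_X}, there exists $n_0\in\NN$ such that $\sup_{\theta\in\Theta}|X_n(T,\theta)-X_\infty(T,\theta)|_2<\delta$ for every $n\geq n_0$. Combining these two facts, we obtain
\begin{equation*}
\sup_{\theta\in\Theta}\bigl| a(X_n(T,\theta),\theta) - a(X_\infty(T,\theta),\theta)\bigr| < \e
\end{equation*}
for every $n\geq n_0$, so the functions $\theta\mapsto a(X_n(T,\theta),\theta)$ converge uniformly on $\Theta$ to $\theta\mapsto a(X_\infty(T,\theta),\theta)$. Since $\Theta$ is compact and all these functions are continuous, the maxima exist and, by the elementary inequality $|\max g_n - \max g_\infty|\leq \sup|g_n-g_\infty|$, pass to the limit, yielding \eqref{eq:lim_max_conv}.

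I do not expect a real obstacle: the whole statement reduces to pushing the uniform convergence provided by Proposition~\ref{prop:unif_conv_map_X} through the continuous map $a$. The only point requiring a small amount of care is justifying the uniform continuity of $a$, which is why I first localize all the trajectories in a common compact set via Lemma~\ref{lem:bound_X} before invoking continuity of $a$.
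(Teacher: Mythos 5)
Your proof is correct, but it takes a genuinely different route from the paper. The paper argues via maximizers: it fixes $\theta^*\in\arg\max_{\theta\in\Theta}a(X_\infty(T,\theta),\theta)$ and uses pointwise convergence at $\theta^*$ to get the $\liminf$ inequality, then extracts a convergent subsequence of maximizers $\theta^*_n\in\arg\max_{\theta\in\Theta}a(X_n(T,\theta),\theta)$ and uses joint continuity of $a$ (together with Lemma~\ref{lem:bound_X}) to get the $\limsup$ inequality. You instead upgrade the convergence of Proposition~\ref{prop:unif_conv_map_X} to uniform convergence of the composed functions $\theta\mapsto a(X_n(T,\theta),\theta)$ via uniform continuity of $a$ on a common compact set, and then invoke the elementary bound $|\max_\Theta g_n - \max_\Theta g_\infty|\leq \sup_\Theta|g_n-g_\infty|$. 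Your argument is shorter and more self-contained; in fact the paper itself adopts essentially your uniform-convergence observation later, in the proof of Lemma~\ref{lem:conv_maxima_gen}. What the paper's maximizer-based argument buys is the byproduct recorded in Remark~\ref{rmk:cluster_maximizers}: every accumulation point of the sequence of maximizers $(\theta^*_n)_n$ belongs to $\arg\max_{\theta\in\Theta}a(X_\infty(T,\theta),\theta)$, a fact used later (Remark~\ref{rmk:cluster_maxim_2}) to establish the support condition \eqref{eq:supp} in the Pontryagin Maximum Principle; your route does not deliver this directly, although it could be recovered with a short additional argument.
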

\begin{proof}
Let us take $\theta^*\in \arg \max_{\theta \in\Theta} a(X_\infty(T,\theta),\theta)$, which exists in view of Hypothesis~\ref{ass:terminal_cost}, the continuity of $X_\infty$, and the compactness of $\Theta$.
Owing to Proposition~\ref{prop:unif_conv_map_X}, we deduce that
\begin{equation*}
\lim_{n\to\infty}  a(X_n(T,\theta^*),\theta^*) =
a(X_\infty(T,\theta^*),\theta^*).
\end{equation*}
Since $\max_{\theta \in \Theta}a(X_n(T,\theta),\theta)\geq a(X_n(T,\theta^*),\theta^*)$ for every $n\geq 1$, we finally obtain that
\begin{equation}\label{eq:liminf_max_conv}
\liminf_{n\to\infty}\, \max_{\theta \in \Theta}a(X_n(T,\theta),\theta) \geq \lim_{n\to\infty}  a(X_n(T,\theta^*),\theta^*) 
 = \max_{\theta \in \Theta}a(X_\infty(T,\theta),\theta).
\end{equation}
Now, for every $n\geq1$, we consider $\theta^*_n\in \arg\max_{\theta\in \Theta}a(X_n(T,\theta),\theta)$, and we extract a subsequence such that
\begin{equation*}
\lim_{j\to\infty} a(X_{n_j}(T,\theta^*_{n_j}),\theta^*_{n_j})
= 
\limsup_{n\to\infty}
a(X_{n}(T,\theta^*_{n}),\theta^*_{n})
=
\limsup_{n\to\infty}\, \max_{\theta\in \Theta}a(X_{n}(T,\theta),\theta)
\end{equation*}
and $\theta^*_{n_j}\to\theta^*_\infty$ as $j\to\infty$.
From the weak convergence $u_n\weak_{L^p}u_\infty$, we deduce that there exists $R>0$ such that $\|u_n\|_{L^p}\leq R$ for every $n\in \NN \cup \{ \infty\}$. Hence, using Lemma~\ref{lem:bound_X}, we obtain that $X_n(T,\theta)\in B_{\bar R}(0)$ for every $\theta\in \Theta$  and for every $n\in \NN \cup \{ \infty\}$.
Therefore, recalling that $a$ is continuous, we conclude that
\begin{equation*}
\lim_{j\to\infty} a(X_{n_j}(T,\theta^*_{n_j}),\theta^*_{n_j})
= a(X_{\infty}(T,\theta^*_{\infty}),\theta^*_{\infty}).
\end{equation*}
Moreover, from the last two identities it follows that
\begin{equation}\label{eq:limsup_max_conv}
\max_{\theta\in \Theta}a(X_{\infty}(T,\theta),\theta)
\geq a(X_{\infty}(T,\theta^*_{\infty}) ,\theta^*_{\infty}) =
\limsup_{n\to\infty}\, \max_{\theta\in \Theta}a(X_{n}(T,\theta),\theta).
\end{equation}
Therefore, combining \eqref{eq:liminf_max_conv} and \eqref{eq:limsup_max_conv}, we deduce \eqref{eq:lim_max_conv} and we conclude the proof.
\end{proof}

\begin{remark}\label{rmk:cluster_maximizers}
Using the same notations as in the proof of Lemma~\ref{lem:conv_maxima}, we observe that \eqref{eq:liminf_max_conv} and \eqref{eq:limsup_max_conv} imply $\theta^*_{\infty} \in \arg \max_{\theta\in \Theta}a(X_{\infty}(T,\theta),\theta)$.
Therefore, as a byproduct of the argument in the previous proof, we obtain that, if $\theta^*_n$ is in $ \arg\max_{\theta\in \Theta}a(X_n(T,\theta),\theta)$, then any accumulation point of the sequence $(\theta^*_n)_{n\geq1}$ belongs to  $\arg \max_{\theta\in \Theta}a(X_{\infty}(T,\theta),\theta)$.
\end{remark}

In the next result, we show that the minimization problem involving the functional $\J$ admits solutions.

\begin{proposition}\label{prop:existence}
Under Hypotheses~\ref{ass:fields}-\ref{ass:running_cost} and setting $\U:=L^p([0,T],\R^k)$, the functional $\J:\U\to \R$ defined in \eqref{eq:def_fun_minimax} assumes minimum, i.e., there exists $\hat u\in\U$ such that
\begin{equation*}
\J(\hat u) = \inf_{u\in\U} \J(u).
\end{equation*} 
\end{proposition}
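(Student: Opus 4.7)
The plan is to apply the direct method of the Calculus of Variations. By Lemma~\ref{lem:well_defined} we know that $\inf_{u\in\U}\J(u)<+\infty$, so a minimizing sequence $(u_n)_{n\in\NN}\subset\U$ with $\J(u_n)\to\inf\J$ exists. The first ingredient is coercivity on $L^p$: combining the non-negativity of $a$ (Hypothesis~\ref{ass:terminal_cost}) with the growth bound of Hypothesis~\ref{ass:running_cost}.iii yields
\begin{equation*}
\J(u)\geq C\|u\|_{L^p}^p-\|c\|_{L^1},
\end{equation*}
so $(u_n)_n$ is bounded in $\U$. Since $\U=L^p$ is reflexive for $1<p<\infty$, up to a (non-relabelled) subsequence we have $u_n\weak_{L^p}u_\infty$ for some $u_\infty\in\U$.

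The core of the argument is to establish that $\J$ is sequentially weakly lower semicontinuous at $u_\infty$. The terminal part is in fact weakly continuous: Lemma~\ref{lem:conv_maxima} gives directly
\begin{equation*}
\lim_{n\to\infty}\max_{\theta\in\Theta} a(X_{u_n}(T,\theta),\theta)=\max_{\theta\in\Theta} a(X_{u_\infty}(T,\theta),\theta).
\end{equation*}
For the integral term, I would invoke the classical lower semicontinuity result for convex integral functionals on $L^p$ (e.g.\ Ioffe's theorem, cf.\ \cite[Theorem~6.54]{FL07}): since $f(t,\cdot)$ is convex and continuous for a.e.\ $t$, $f(\cdot,u)$ is measurable for every $u$, and the lower bound \eqref{eq:growth_f} ensures that the negative part of $f(t,u(t))$ is equi-integrable along any $L^p$-bounded sequence, one obtains
\begin{equation*}
\int_0^T f(t,u_\infty(t))\,dt\leq \liminf_{n\to\infty}\int_0^T f(t,u_n(t))\,dt.
\end{equation*}

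Summing the two contributions gives $\J(u_\infty)\leq \liminf_{n\to\infty}\J(u_n)=\inf_{u\in\U}\J(u)$, and hence $u_\infty$ is a minimizer. The main obstacle is the weak lower semicontinuity of the integral cost, which is delicate because $f$ is not assumed to have an upper bound of order $|u|^p$ (only the one-sided bound of Hypothesis~\ref{ass:running_cost}.iii plus the pointwise finiteness condition~(iv)); one must therefore rely on a version of the Tonelli-Ioffe theorem that tolerates $+\infty$-valued integrals, using convexity in $u$ and measurability in $t$ together with the coercive lower bound to rule out concentration of mass at $-\infty$. The terminal-cost side, by contrast, is handled cleanly by the uniform convergence supplied by Proposition~\ref{prop:unif_conv_map_X} and already distilled in Lemma~\ref{lem:conv_maxima}.
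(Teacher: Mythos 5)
Your argument is correct and coincides with the paper's own proof: both apply the direct method, obtaining coercivity from the non-negativity of $a$ together with the growth bound of Hypothesis~\ref{ass:running_cost}-(iii), weak continuity of the terminal cost from Lemma~\ref{lem:conv_maxima}, and weak lower semicontinuity of the integral term from the classical result in \cite[Theorem~6.54]{FL07}. Your closing discussion of why the one-sided growth bound suffices for the integral functional is a reasonable elaboration of a point the paper leaves to the cited reference, but it does not change the substance of the argument.
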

\begin{proof}
The argument is based on the direct method of Calculus of Variations (see \cite[Theorem~1.15]{D93}), i.e., we shall prove that the functional $\J$ is sequentially lower semi-continuous and coercive with respect to the weak topology of $L^p$. \\
In regards of the lower semi-continuity, Lemma~\ref{lem:conv_maxima} guarantees that the end-point cost at the right-hand side of \eqref{eq:def_fun_minimax} is sequentially continuous along weakly convergent sequences of controls. 
Therefore, it suffices to show that the integral term is sequentially weakly lower semi-continuous. However, this fact follows from Hypothesis~\ref{ass:running_cost} and from a classical result for weak lower semi-continuity of integral functionals in $L^p$ (see \cite[Theorem~6.54]{FL07}). \\
To establish the $L^p$-weak coercivity, we observe that
for every $K\in\R$ we have that
\begin{equation*}
K \geq \J(u) \implies  \| u \|_{L^p}^p \leq  \frac{K+\|c\|_{L^1}}{C},
\end{equation*}
where we used Hypothesis~\ref{ass:running_cost}-(iii) and the non-negativity of the function $a$ designing the end-point cost. This concludes the proof.
\end{proof}

\section{$\Gamma$-convergence for minimax problems} \label{sec:Gamma}

In this section, we consider a family of functionals
$\J^N:\U\to \R$ defined as
\begin{equation}\label{eq:def_fun_minimax_N}
\J^N(u) := \max_{\theta \in\Theta^N} a(X_u(T,\theta),\theta)  + \int_0^T f(t,u(t))\, dt
\end{equation}
for every $u\in\U$, where $\Theta^N\subseteq \Theta$ is compact for every $N\in\Next$.
In particular, we are interested in studying the \textit{stability} of the minimization problems related to \eqref{eq:def_fun_minimax_N} in the case that $d_H(\Theta^N,\Theta^\infty)\to 0$ as $N\to\infty$, where $d_H(\cdot,\cdot)$ denotes the Hausdorff distance between sets. Recalling that we have $\Theta\subset\R^l$ compact, in our framework the Hausdorff distance has the following expression
\begin{equation*}
d_H(A,B) := \max\left\{ \sup_{a\in A}\inf_{b\in B}|a-b|_2,\,  \sup_{b\in B}\inf_{a\in A}|b-a|_2 \right\}
\end{equation*} 
for every $A,B\subset \Theta$. For further details on this topic, we recommend \cite{Haus99}.
\begin{remark}\label{rmk:finite_approx}
A particular example relevant for applications is when $\Theta^\infty=\Theta$ and $\Theta^N\subset \Theta$ is a finite $\e_N$-net for $\Theta$, i.e., for every $\theta \in \Theta$ there exists $\theta'\in \Theta^N$ such that $|\theta-\theta'|_2\leq \e_N$.
If $(\e_N)_{N\in \NN}$ is a sequence such that $\e_N \to 0$ as $N\to\infty$, then $d_H(\Theta^N,\Theta^\infty)\to 0$.
In this case, we are interested in minimizing the functional $\J^\infty$, whose single evaluation, however, requires a maximization problem over an infinite set $\Theta$, and it involves the resolution of an infinite number of Cauchy problems.
On the other hand, for $N<\infty$ the maximum that appears in the definition of $\J^N$ is taken over a finite set.
The aim of this section is to understand in what sense the problem of minimizing $\J^N$ provides an approximation to the problem of minimizing the more complicated functional $\J^\infty$.

\end{remark}

Before proceeding to the main result of this section, we prove an extension of Lemma~\ref{lem:conv_maxima}.

\begin{lemma}\label{lem:conv_maxima_gen}
Under Hypotheses~\ref{ass:fields}-\ref{ass:terminal_cost} and setting $\U:=L^p([0,T],\R^k)$ with $1<p<\infty$, let us consider a sequence of  controls $(u_N)_{N\in\NN} \subset \U$ such that $u_N\weak_{L^p}u_\infty$ as $N\to\infty$.
For every $N\in \NN \cup \{ \infty\}$, let $X_N:[0,T]\times \Theta\to \R^d$ be the application defined in \eqref{eq:def_evol_ens} that collects the trajectories of the ensemble of control systems \eqref{eq:ens_ctrl_sys} corresponding to the  control $u_N$.
Moreover, let us further assume that for every $N\in\Next$ we have a compact set $\Theta^N\subseteq\Theta$ such that 
$d_H(\Theta^N,\Theta^\infty)\to 0$ as $N\to\infty$.
Then,
\begin{equation} \label{eq:lim_max_conv_gen}
\lim_{N\to\infty}\, \max_{\theta \in\Theta^N} a(X_N(T,\theta),\theta) = \max_{\theta \in\Theta^\infty} a(X_\infty(T,\theta),\theta).
\end{equation}
\end{lemma}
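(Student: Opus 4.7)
The plan is to adapt the two-sided $\liminf$/$\limsup$ argument used in Lemma~\ref{lem:conv_maxima}, taking care that now both the maps $X_N$ and the sets $\Theta^N$ vary with $N$. The uniform convergence of $X_N$ to $X_\infty$ on $[0,T]\times\Theta$ provided by Proposition~\ref{prop:unif_conv_map_X}, combined with the continuity of $a$ and the Hausdorff convergence $d_H(\Theta^N,\Theta^\infty)\to 0$, is exactly what is needed to pass quantities through the max.

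For the $\liminf$ inequality, I would pick a maximizer $\theta^*\in\arg\max_{\theta\in\Theta^\infty}a(X_\infty(T,\theta),\theta)$ (existence is ensured by the compactness of $\Theta^\infty$ as the Hausdorff limit of closed subsets of the compact $\Theta$, together with the continuity of $X_\infty$ and of $a$). By Hausdorff convergence, for each $N$ there exists $\theta_N^*\in\Theta^N$ with $|\theta_N^*-\theta^*|_2\le d_H(\Theta^N,\Theta^\infty)\to 0$. Writing
\begin{equation*}
|X_N(T,\theta_N^*)-X_\infty(T,\theta^*)|_2 \le \sup_{(t,\theta)}|X_N(t,\theta)-X_\infty(t,\theta)|_2+|X_\infty(T,\theta_N^*)-X_\infty(T,\theta^*)|_2,
\end{equation*}
both summands vanish as $N\to\infty$ by Proposition~\ref{prop:unif_conv_map_X} (uniform convergence and equi-continuity of $X_\infty$). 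Continuity of $a$ then yields $a(X_N(T,\theta_N^*),\theta_N^*)\to a(X_\infty(T,\theta^*),\theta^*)$, and since $\max_{\theta\in\Theta^N}a(X_N(T,\theta),\theta)\ge a(X_N(T,\theta_N^*),\theta_N^*)$, we get the desired $\liminf$ bound.

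For the $\limsup$ inequality, I would choose $\theta_N^\dagger\in\arg\max_{\theta\in\Theta^N}a(X_N(T,\theta),\theta)$ and extract a subsequence $(N_j)$ realizing the $\limsup$ and along which $\theta_{N_j}^\dagger\to\theta_\infty^\dagger$ in $\Theta$, using that $\Theta$ is compact. The key subtle point, which I expect to be the main obstacle, is to certify that $\theta_\infty^\dagger\in\Theta^\infty$: this uses the other half of the Hausdorff definition, namely $\mathrm{dist}(\theta_{N_j}^\dagger,\Theta^\infty)\le d_H(\Theta^{N_j},\Theta^\infty)\to 0$, together with the closedness of $\Theta^\infty$. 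Once this is secured, the same uniform-convergence + continuity argument gives $a(X_{N_j}(T,\theta_{N_j}^\dagger),\theta_{N_j}^\dagger)\to a(X_\infty(T,\theta_\infty^\dagger),\theta_\infty^\dagger)\le\max_{\theta\in\Theta^\infty}a(X_\infty(T,\theta),\theta)$, producing the $\limsup$ inequality and completing the proof.

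The only real novelty compared to Lemma~\ref{lem:conv_maxima} is the extra bookkeeping coming from the fact that candidate maximizers must live in the correct (varying) set; both directions of the Hausdorff distance are used, one for each inequality. Everything else is an immediate reuse of Proposition~\ref{prop:unif_conv_map_X} and Hypothesis~\ref{ass:terminal_cost}.
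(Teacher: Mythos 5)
Your proof is correct and follows essentially the same route as the paper: the $\liminf$ half via a Hausdorff-approximating sequence $\theta_N^*\in\Theta^N$ of a maximizer over $\Theta^\infty$, and the $\limsup$ half by extracting a convergent subsequence of maximizers and invoking uniform convergence of $\theta\mapsto a(X_N(T,\theta),\theta)$. If anything, you are slightly more explicit than the paper (which just says ``verbatim repetition'' of Lemma~\ref{lem:conv_maxima}) in certifying that the cluster point $\theta_\infty^\dagger$ actually lies in $\Theta^\infty$ via $\mathrm{dist}(\theta_{N_j}^\dagger,\Theta^\infty)\le d_H(\Theta^{N_j},\Theta^\infty)\to 0$ and closedness, which is precisely the extra ingredient this generalization requires.
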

\begin{proof}
The argument follows the lines of the proof of Lemma~\ref{lem:conv_maxima}. 
Using Proposition~\ref{prop:unif_conv_map_X}, it descends that the mappings $\theta\mapsto a(X_N(T,\theta),\theta)$ are converging uniformly for $\theta\in\Theta$ to $\theta\mapsto a(X_\infty(T,\theta),\theta)$.
Moreover, given $\theta^*\in \arg \max_{\theta \in\Theta^\infty} a(X_\infty(T,\theta),\theta)$, and, for every $N\geq 1$, we choose $\theta'_N\in \Theta^N$ such that $|\theta^* - \theta'_N|_2\leq\e_N:=d_H(\Theta^N,\Theta^\infty)$. Then, we have that
\begin{equation*}
\liminf_{N\to\infty}\, \max_{\theta \in \Theta^N}a(X_N(T,\theta),\theta) \geq \lim_{N\to\infty}  a(X_N(T,\theta'_N),\theta'_N) 
 = \max_{\theta \in \Theta^\infty}a(X_\infty(T,\theta),\theta).
\end{equation*}
With a \textit{verbatim} repetition of the passages done in Lemma~\ref{lem:conv_maxima}, we obtain that
\begin{equation*}
\max_{\theta\in \Theta^\infty}a(X_{\infty}(T,\theta),\theta)
\geq 
\limsup_{N\to\infty}\, \max_{\theta\in \Theta^N}a(X_{N}(T,\theta),\theta),
\end{equation*}
and we deduce the thesis.
\end{proof}

\begin{remark}\label{rmk:cluster_maxim_2}
Similarly as observed in Remark~\ref{rmk:cluster_maximizers}, if $u_N\weak_{L^p} u_\infty$ as $N\to\infty$ and $\theta^*_N\in \arg\max_{\theta\in \Theta^N}a(X_N(T,\theta),\theta)$, then any accumulation point of the sequence $(\theta^*_N)_{N\geq1}$ belongs to  $\arg \max_{\theta\in \Theta}a(X_{\infty}(T,\theta),\theta)$. 
\end{remark}

We shall study the asymptotics of the minimization of $\J^N$ through the lens of $\Gamma$-convergence. We briefly recall below this notion. For a thorough introduction to this topic, we refer the reader to the textbook \cite{D93}. 

\begin{defn} \label{defn:G_conv}
Let $(\X,d)$ be a metric space, and for every $N\geq 1$ let $\F^N:\X\to\R\cup \{+\infty \}$ be a functional defined over $X$. 
The sequence $(\F^N)_{N\geq 1}$ is said to $\Gamma$-converge to a functional $\F:\X\to\R\cup \{+\infty \}$ if the following conditions holds:
\begin{itemize}
\item \emph{liminf condition}: for every sequence
$(u_N)_{N\geq 1}\subset \X$ such that $u_N\to_\X u$
as $N\to\infty$ the following inequality holds
\begin{equation}\label{eq:liminf_cond}
\F(u) \leq \liminf_{N\to\infty} \F^N(u_N);
\end{equation}
\item \emph{limsup condition}: 
for every $u\in \X$ there exists a sequence 
$(u_N)_{N\geq 1}\subset \X$ such that 
$u_N\to_\X u$ as $N\to\infty$ and such that
the following inequality holds:
\begin{equation}\label{eq:limsup_cond}
\F(u) \geq \limsup_{N\to\infty} \F^N(u_N).
\end{equation}
\end{itemize}
If the conditions listed above are satisfied, then
we write $\F^N \to_\Gamma \F$ as $N\to\infty$.
\end{defn} 

In view of $\Gamma$-convergence, it is possible to relate the minimizers of the functionals $(\F^N)_{N\geq 1}$ to the minimizers of the limiting functional $\F$. Namely, assuming that the functionals of the sequence $(\F^N)_{N\geq 1}$ are equi-coercive, if $\hat u_N \in \mathrm{arg\, min}\, \F^N$ for every $N\geq 1$, then the sequence $(\hat u_N)_{N\geq1}$ is pre-compact in $\X$, and any of its limiting points is a minimizer for $\F$ (see \cite[Corollary~7.20]{D93}).
This means that the problem of minimizing $\F$ can be replaced by the minimization of $\F^N$, when $N$ is sufficiently large.\\
In our framework, it is convenient to endow the space of controls $\U:=L^p([0,1],\R^k)$ ($1<p<\infty$) with the weak topology. However, on the one hand, Definition~\ref{defn:G_conv} requires the domain $\X$ where the limiting and the approximating functionals are defined to be a metric space. On the other hand, the weak topology of $L^p$ is metrizable only when restricted to bounded sets (see \cite[Theorem~3.29 and Remark~3.3]{B11}).
In the next lemma, we see how we can safely choose a restriction on a bounded set, without losing any minimizer.

\begin{lemma}\label{lem:restr}
Under Hypotheses~\ref{ass:fields}-\ref{ass:running_cost} and setting $\U:=L^p([0,T],\R^k)$, for every $N\in\Next$, let $\J^N:\U\to\R_+$ be the functional defined in
\eqref{eq:def_fun_minimax_N}, where the sequence of compact sets $\Theta^1,\ldots,\Theta^\infty \subseteq\Theta$ satisfies 
$d_H(\Theta^N,\Theta^\infty)\to 0$ as $N\to\infty$. Then, there exists 
$\rho>0$ such that, if $ u^* \in \arg \min_\U \, \J^N$ for some $N\in \Next$, then
\begin{equation} \label{eq:rad_restr}
\| u^* \|_{L^p} \leq \rho.
\end{equation} 
\end{lemma}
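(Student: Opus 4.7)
The plan is to exhibit a competitor $\tilde u$ whose value $\J^N(\tilde u)$ is uniformly bounded in $N\in\Next$, and then combine this with the weak coercivity already used in the proof of Proposition~\ref{prop:existence} to squeeze $\|u^*\|_{L^p}$ between two constants independent of $N$.

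First I would take as reference the constant-in-time control $\tilde u(t):=u'$ for every $t\in[0,T]$, where $u'\in\R^k$ is given by Hypothesis~\ref{ass:running_cost}-(iv). Then the integral part of $\J^N(\tilde u)$ is bounded by $\|c'\|_{L^1}$, uniformly in $N$. For the endpoint part, since $\tilde u$ is fixed, the trajectory map $X_{\tilde u}:[0,T]\times\Theta\to\R^d$ is well defined and continuous (by Proposition~\ref{prop:unif_conv_map_X} applied to the constant sequence, or simply Lemma~\ref{lem:bound_X} together with classical continuous dependence), so by Hypothesis~\ref{ass:terminal_cost} and compactness of $\Theta$ we can set
\begin{equation*}
M_a := \max_{\theta\in\Theta} a\bigl(X_{\tilde u}(T,\theta),\theta\bigr) < +\infty.
\end{equation*}
Because $\Theta^N\subseteq\Theta$, this gives $\max_{\theta\in\Theta^N} a(X_{\tilde u}(T,\theta),\theta)\leq M_a$ for every $N\in\Next$, and hence the uniform upper bound
\begin{equation*}
\J^N(\tilde u) \leq M_a + \|c'\|_{L^1} =: M.
\end{equation*}

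Next I would use Hypothesis~\ref{ass:running_cost}-(iii) together with the non-negativity of $a$ (Hypothesis~\ref{ass:terminal_cost}) to get the lower bound
\begin{equation*}
\J^N(u) \geq C\,\|u\|_{L^p}^p - \|c\|_{L^1}
\end{equation*}
valid for every $u\in\U$ and every $N\in\Next$, exactly as in the coercivity step of the proof of Proposition~\ref{prop:existence}. Combining the two estimates with the minimality of $u^*$,
\begin{equation*}
C\,\|u^*\|_{L^p}^p - \|c\|_{L^1} \leq \J^N(u^*) \leq \J^N(\tilde u) \leq M,
\end{equation*}
so it suffices to choose
\begin{equation*}
\rho := \left( \frac{M + \|c\|_{L^1}}{C} \right)^{1/p},
\end{equation*}
which is independent of $N$ and yields \eqref{eq:rad_restr}.

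I do not foresee any serious obstacle here: the statement is essentially a uniform version of the coercivity argument used for Proposition~\ref{prop:existence}, and the only point that requires any thought is the uniformity in $N$ of the upper bound on $\J^N(\tilde u)$. This is immediate once one notices that the competitor $\tilde u$ can be chosen independently of $N$ and that the maxima are all taken over subsets of the fixed compact set $\Theta$, so no $\Gamma$-convergence or Hausdorff-distance information is actually needed for this lemma --- the hypothesis $d_H(\Theta^N,\Theta^\infty)\to 0$ is used only later, when this uniform bound is exploited to restrict the functionals to a common bounded (hence weakly metrizable) ball of $\U$.
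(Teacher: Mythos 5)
Your proof is correct and follows essentially the same route as the paper: the same constant competitor $\tilde u\equiv u'$, the same uniform upper bound $\kappa=\max_{\theta\in\Theta}a(X_{\tilde u}(T,\theta),\theta)+\|c'\|_{L^1}$ via the inclusion $\Theta^N\subseteq\Theta$, the same coercivity lower bound from Hypothesis~\ref{ass:running_cost}-(iii) and the non-negativity of $a$, and the same choice of $\rho$. Your closing observation that the Hausdorff-convergence hypothesis is not actually used in this lemma is also accurate.
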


\begin{proof}
Let us consider the control $\bar u(t) = u'$ for every $t\in[0,T]$, where $u'\in \R^k$ is the value prescribed by Hypothesis~\ref{ass:running_cost}-(iv). If $ u^* \in \U$ is a minimizer for  the functional $\J^N$, then we have $\J^N(u^*) \leq \J^N(\bar u)$. Then, using Lemma~\ref{lem:bound_X} and the inclusion $\Theta^N\subseteq\Theta$, we deduce:
\begin{equation*}
\J^N(\bar u) \leq \kappa:= \max_{\theta\in\Theta} 
a(X_{\bar u}(T,\theta),\theta) + \|c'\|_{L^1}.
\end{equation*}
Moreover, recalling that the function $a:\R^d\times \Theta\to\R_+$ that designs the terminal cost in \eqref{eq:def_fun_minimax} is non-negative, from Hypothesis~\ref{ass:running_cost}-(iii) we obtain
\[
C\|u^*\|_{L^p}^p - \| c \|_{L^1} \leq \J^N( u^*)
\leq \kappa.
\] 
Thus, to prove \eqref{eq:rad_restr} it is sufficient
to set $\rho:= \left(\frac{\kappa + \|c\|_{L^1}}{C}\right)^{\frac1p}$.
\end{proof}

From the previous result, we deduce the following inclusion:
\begin{equation*}
\bigcup_{N=1}^\infty \left(\arg \min_\U \, \J^N\right) \subset \X,
\end{equation*}
where we set 
\begin{equation} \label{eq:def_restr_sp}
\X:=\{ u\in \U: \|u\|_{L^p} \leq\rho\},
\end{equation}
and where $\rho>0$ is provided by Lemma~\ref{lem:restr}.
Since $\X$ is a closed ball of $L^p$, when it is equipped with the $L^p$-weak topology, $\X$ is a compact metric space. Hence, for every $N\in\Next$ we can restrict the functionals $\J^N:\U\to\R$ to $\X$ to construct an approximation in the sense of $\Gamma$-convergence. With a slight abuse of notations, we shall continue to denote by $\J^N$ the functional restricted to $\X$.

\begin{theorem}\label{thm:G_conv}
Under Hypotheses~\ref{ass:fields}-\ref{ass:running_cost}, let $\X\subset \U$ be the set defined in \eqref{eq:def_restr_sp}, equipped with the weak topology of $L^p$. 
For every $N\in\Next$, let $\J^N:\X\to\R$ be the restriction to $\X$ of the applications defined in \eqref{eq:def_fun_minimax_N}, where $\Theta^N\subseteq\Theta$.  
Then, if $d_H(\Theta^N,\Theta^\infty)\to0$ as $N\to\infty$, we have $\J^N\to_\Gamma \J^\infty$ and 
\begin{equation} \label{eq:conv_inf}
\inf_\X \J^\infty = \lim_{N\to\infty} \inf_\X\J^N.
\end{equation} 
Moreover, if $ u_N^* \in \X$ is a minimizer of $\J^N$ for every $N\in\NN$, then the sequence
$( u_N^*)_{N\in\NN}$ is pre-compact with
respect to the weak topology of $L^p$, and 
any limiting point of this sequence is a minimizer
of $\J^\infty$.
\end{theorem}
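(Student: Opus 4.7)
The plan is to verify separately the liminf and limsup conditions of Definition~\ref{defn:G_conv}, and then to combine them with the weak compactness of $\X$ to deduce the convergence of the infima and of the minimizers. Since $\X$ is a norm-bounded closed ball in the reflexive Banach space $\U$, when equipped with the $L^p$-weak topology it is a compact metric space, which is the natural framework for the theorem. Moreover, by Lemma~\ref{lem:restr}, restricting each $\J^N$ to $\X$ discards no minimizer over $\U$, so any conclusion reached on $\X$ transfers to the original problem.

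For the liminf condition, I would consider a sequence $(u_N)_{N\geq 1}\subset\X$ with $u_N\weak_{L^p} u_\infty$ and split $\J^N(u_N)$ into its end-point cost and its integral cost. For the end-point piece, Lemma~\ref{lem:conv_maxima_gen} yields
\begin{equation*}
\lim_{N\to\infty} \max_{\theta\in\Theta^N} a(X_{u_N}(T,\theta),\theta) = \max_{\theta\in\Theta^\infty} a(X_{u_\infty}(T,\theta),\theta),
\end{equation*}
which is in fact a genuine limit. For the integral piece, the measurability, continuity, convexity and growth properties in Hypothesis~\ref{ass:running_cost} allow me to invoke a standard weak lower semicontinuity theorem for integral functionals on $L^p$ (see, e.g., \cite[Theorem~6.54]{FL07}). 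Summing the two contributions produces \eqref{eq:liminf_cond}.

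For the limsup condition, I would use the trivial recovery sequence $u_N\equiv u$ for every $N$. The integral term then remains fixed at $\int_0^T f(t,u(t))\,dt$, so it suffices to control the end-point cost. By Proposition~\ref{prop:unif_conv_map_X}, the map $\theta\mapsto a(X_u(T,\theta),\theta)$ is continuous on the compact set $\Theta$, and the Hausdorff convergence $d_H(\Theta^N,\Theta^\infty)\to 0$ readily gives $\max_{\theta\in\Theta^N} a(X_u(T,\theta),\theta)\to\max_{\theta\in\Theta^\infty} a(X_u(T,\theta),\theta)$; this is essentially the static specialization of Lemma~\ref{lem:conv_maxima_gen} applied to the constant sequence of controls. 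Hence $\J^N(u)\to\J^\infty(u)$, which is stronger than the required inequality \eqref{eq:limsup_cond}.

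Finally, having established $\J^N\to_\Gamma\J^\infty$, the convergence \eqref{eq:conv_inf} of the infima together with the pre-compactness of the sequence of minimizers and the identification of its accumulation points as minimizers of $\J^\infty$ follows from the abstract theory of $\Gamma$-convergence on compact metric spaces, as in \cite[Corollary~7.20]{D93}: the weak compactness of $\X$ supplies the accumulation points, the liminf condition applied along a weakly convergent subsequence shows that any such limit attains $\inf_\X\J^\infty$, and the limsup condition evaluated at a fixed comparison control pins down $\limsup_N\inf_\X\J^N\leq\inf_\X\J^\infty$. The only point deserving some care is the interplay between the compactness and metrizability of $\X$ with the weak $L^p$ topology, which is why Lemma~\ref{lem:restr} plays an essential preparatory role; once this is settled, the remaining arguments are routine applications of the general $\Gamma$-convergence machinery.
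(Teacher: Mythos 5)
Your proposal is correct and follows essentially the same route as the paper: the liminf inequality is obtained by combining Lemma~\ref{lem:conv_maxima_gen} for the end-point cost with the weak lower semicontinuity of the integral functional from \cite[Theorem~6.54]{FL07}, the limsup inequality uses the constant recovery sequence together with Lemma~\ref{lem:conv_maxima_gen}, and the convergence of infima and minimizers is delegated to \cite[Corollary~7.20]{D93}. No gaps.
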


\begin{proof}
We start by establishing the $\liminf$ inequality \eqref{eq:liminf_cond}. Let us consider a sequence of  controls $(u_N)_{N\geq 1}\subset \X$ such that $u_N\weak_{L^p} u_\infty$ as $N\to \infty$.
Then, recalling the characterization of weakly lower semi-continuous functionals in $L^p$ (see \cite[Theorem~6.54]{FL07}), Hypothesis~\ref{ass:running_cost} ensures that
\begin{equation*}
\int_0^T f(t,u_\infty(t))\, dt \leq \liminf_{N\to\infty}
\int_0^T f(t,u_N(t))\, dt,
\end{equation*}
and, in virtue of Lemma~\ref{lem:conv_maxima_gen}, we deduce that
\begin{equation*}
\J^\infty(u_\infty) 
\leq  \liminf_{N\to \infty}\J^N( u_N ).
\end{equation*}
Regarding the $\limsup$ inequality, given an  control $u\in\X$, we consider the constant sequence by setting $u_N:= u$ for every $N\geq 1$. Using again Lemma~\ref{lem:conv_maxima_gen}, we immediately deduce that
\begin{equation*}
\J^\infty(u) = \lim_{N\to \infty}\J^N( u ),
\end{equation*} 
establishing the $\Gamma$-convergence.
Finally, the convergence of the minimizers and of the infima is a well-known fact (see \cite[Corollary~7.20]{D93}).
\end{proof}

The next result is a direct consequence of the $\Gamma$-convergence result established in Theorem~\ref{thm:G_conv}. 

\begin{corollary} \label{cor:conv_integral_term}
Under the same hypotheses and notations as in Theorem~\ref{thm:G_conv}, for every $N\in\NN$ let $ u_N^* \in \X$ be a minimizer of $\J^N$, and let us assume that there exists $u_\infty^*\in\X$ such that $u_N^*\weak_{L^p}u_\infty^*$. 
Then, we have 
\begin{equation} \label{eq:conv_integral_term}
\int_0^Tf(t,u_\infty^*(t))\, dt = \lim_{N\to\infty} \int_0^Tf(t,u_N^*(t))\, dt.
\end{equation} 
\end{corollary}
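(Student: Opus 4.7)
The plan is to split $\J^N(u_N^*)$ into its two summands and pass to the limit in each of them using results already established. The key observation is that the integral term does not converge by weak lower semi-continuity alone (which only gives $\liminf$), so we need an indirect argument that exploits the fact that we are dealing with minimizers, not arbitrary weakly convergent sequences.

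First, I would invoke Theorem~\ref{thm:G_conv} to deduce that $u_\infty^*$ is itself a minimizer of $\J^\infty$ (as a weak limit point of minimizers of $\J^N$ along the $\Gamma$-converging sequence), and that
\[
\lim_{N\to\infty}\J^N(u_N^*) = \lim_{N\to\infty}\inf_{\X}\J^N = \inf_{\X}\J^\infty = \J^\infty(u_\infty^*).
\]
Next, I would apply Lemma~\ref{lem:conv_maxima_gen} to the weakly convergent sequence $u_N^*\weak_{L^p}u_\infty^*$ (together with the sets $\Theta^N$ converging to $\Theta^\infty$ in Hausdorff distance) in order to obtain
\[
\lim_{N\to\infty}\max_{\theta\in\Theta^N} a(X_{u_N^*}(T,\theta),\theta) = \max_{\theta\in\Theta^\infty} a(X_{u_\infty^*}(T,\theta),\theta).
\]

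The conclusion then follows by subtracting: writing $I^N(u):=\int_0^T f(t,u(t))\,dt$ and $M^N(u):=\max_{\theta\in\Theta^N}a(X_u(T,\theta),\theta)$, we have $\J^N=M^N+I^N$, and combining the two displays above yields
\[
\lim_{N\to\infty} I^N(u_N^*) = \lim_{N\to\infty}\bigl(\J^N(u_N^*)-M^N(u_N^*)\bigr) = \J^\infty(u_\infty^*) - M^\infty(u_\infty^*) = I^\infty(u_\infty^*),
\]
which is exactly \eqref{eq:conv_integral_term}.

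There is no real obstacle here, since the two ingredients (the convergence of infima from $\Gamma$-convergence, and the continuity of the maximum functional along weakly convergent sequences) have already been proved. The only subtle point worth spelling out in the write-up is why $u_\infty^*$ is a minimizer of $\J^\infty$: this is the ``moreover'' part of Theorem~\ref{thm:G_conv} applied to the sequence $(u_N^*)_{N\in\NN}$, which lies in $\X$ by Lemma~\ref{lem:restr}, and it is what guarantees $\J^N(u_N^*)\to\J^\infty(u_\infty^*)$ rather than merely a $\liminf$ inequality.
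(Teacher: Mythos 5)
Your argument is correct and coincides with the paper's own proof: both deduce from Theorem~\ref{thm:G_conv} that $u_\infty^*$ minimizes $\J^\infty$ and that $\J^N(u_N^*)\to\J^\infty(u_\infty^*)$ via the convergence of infima \eqref{eq:conv_inf}, then use Lemma~\ref{lem:conv_maxima_gen} to handle the maximum term and conclude by subtraction. No differences worth noting.
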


\begin{proof}
From Theorem~\ref{thm:G_conv}, it follows immediately that $u_\infty^*$ is a minimizer of $\J^\infty$.
Hence, from \eqref{eq:conv_inf} we deduce that
\begin{equation*}
\J^\infty(u^*_\infty) = \lim_{N\to\infty} \J^N(u^*_N),
\end{equation*}
and by combining this identity with the definition of $\J^N$ in \eqref{eq:def_fun_minimax_N} and with Lemma~\ref{lem:conv_maxima_gen}, we obtain the thesis.
\end{proof}

In the special case where the function defining the running cost is of the form $f(t,u)= \beta |u|_2^p$ with $\beta>0$, given a $L^p$-weakly convergent sequence of minimizers $u^*_N\weak_{L^p}u^*_\infty$, then the identity \eqref{eq:conv_integral_term} in Corollary~\ref{cor:conv_integral_term} yields
\begin{equation*}
\|u^*_\infty\|_{L^p} = \lim_{N\to\infty}\|u^*_N\|_{L^p}.
\end{equation*}
Recalling that the spaces $L^p$ are uniformly convex for $1<p<\infty$, the last limit implies that the sequence of minimizers $(u^*_N)_{N\in\NN}$ is actually \textit{strongly} convergent to $u^*_\infty \in \arg\min_\U \J^\infty$ (see \cite[Proposition~3.32]{B11}).
Hence, despite that usually the approximation of the minimizers holds for the same topology that underlies the $\Gamma$-convergence result, here the approximation is provided with respect to the \emph{strong} topology of $L^p$, and not just in the weak sense.
A natural question is when, in general, we may have pre-compactness of the minimizers of $(\J^N)_{N\in\NN}$ in the $L^p$-strong topology. 
Before answering, we formulate a further assumption.

\begin{assumption}\label{ass:strict_conv}
The function $f:[0,T]\times\R^k \to\R$ defining the running cost in \eqref{eq:def_fun_minimax} and in \eqref{eq:def_fun_minimax_N} is strictly convex in the second argument, for a.e. $t\in[0,T]$.
\end{assumption}

The convergence in the strong topology holds whenever the running cost is strictly convex in the control variable, and this follows from a result established in the celebrated work \cite{Vis84}.
This condition generalizes the situation described in \cite{Scag22,Scag23} for optimal control of ensembles with \textit{averaged terminal cost}.

\begin{corollary}\label{cor:strong_conv}
Under the same hypotheses and notations as in Theorem~\ref{thm:G_conv}, for every $N\in\NN$ let $ u_N^* \in \X$ be a minimizer of $\J^N$. In addition, let us suppose that Hypothesis~\ref{ass:strict_conv} holds.
Then, the sequence $(u^*_N)_{N\in\NN}$ is pre-compact in the $L^p$-strong topology, and every limiting point is a minimizer of $\J^\infty$.
\end{corollary}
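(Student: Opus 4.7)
The plan is to combine the weak $L^p$-pre-compactness granted by Theorem~\ref{thm:G_conv} with Visintin's strong convergence theorem from \cite{Vis84}. Since strong convergence in $L^p$ is metric, I would argue by the subsequence principle: to establish pre-compactness of $(u_N^*)_N$ in the $L^p$-strong topology, it suffices to show that every subsequence admits a further sub-subsequence that converges strongly to a minimizer of $\J^\infty$.

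Fix an arbitrary subsequence of $(u_N^*)_N$. By Theorem~\ref{thm:G_conv}, there is a weakly convergent sub-subsequence, still denoted $(u_{N_j}^*)_j$, with $u_{N_j}^* \weak_{L^p} u_\infty^*$ and $u_\infty^* \in \arg\min_\X \J^\infty$. Corollary~\ref{cor:conv_integral_term} then yields the convergence of the integral parts:
\begin{equation*}
\lim_{j\to\infty}\int_0^T f(t, u_{N_j}^*(t))\, dt = \int_0^T f(t, u_\infty^*(t))\, dt.
\end{equation*}
Since $f$ is a normal integrand (Hypothesis~\ref{ass:running_cost}, parts (i)--(ii)) which is moreover strictly convex in the control variable (Hypothesis~\ref{ass:strict_conv}), Visintin's theorem applies: the $L^p$-weak convergence of $(u_{N_j}^*)_j$ together with the convergence of the integrals of $f(t,u_{N_j}^*(t))$ forces $u_{N_j}^* \to u_\infty^*$ in measure on $[0,T]$.

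To upgrade in-measure convergence to $L^p$-strong convergence, I would prove that the family $(|u_{N_j}^*|_2^p)_j$ is equi-integrable and then invoke Vitali's convergence theorem. Setting $g_j(t) := f(t,u_{N_j}^*(t)) + c(t)$, Hypothesis~\ref{ass:running_cost}-(iii) gives $g_j \geq C|u_{N_j}^*|_2^p \geq 0$; moreover the non-negative $g_j$'s converge in measure to $g_\infty(t) := f(t,u_\infty^*(t)) + c(t)$ (by continuity of $f(t,\cdot)$) and their integrals converge to $\int g_\infty < \infty$. A standard Scheff\'e-type argument then yields $g_j \to g_\infty$ in $L^1$, from which the equi-integrability of $|u_{N_j}^*|_2^p$ follows, and hence $u_{N_j}^* \to u_\infty^*$ in $L^p$ via Vitali. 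The main obstacle I foresee is precisely this last step: Visintin's result is typically stated as convergence in measure, so the growth bound of Hypothesis~\ref{ass:running_cost}-(iii) must be carefully exploited to rule out concentration and produce equi-integrability. Once strong convergence along every subsequence is established, the pre-compactness of $(u_N^*)_N$ in the $L^p$-strong topology follows, and every limit point is a minimizer of $\J^\infty$ by Theorem~\ref{thm:G_conv}.
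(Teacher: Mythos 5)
Your proposal is correct and follows essentially the same route as the paper: weak pre-compactness and minimality of the limit from Theorem~\ref{thm:G_conv}, convergence of the running-cost integrals from Corollary~\ref{cor:conv_integral_term}, and Visintin's strict-convexity theorem to upgrade weak to strong convergence. The only difference is that the paper invokes \cite[Theorem~3]{Vis84} as directly delivering $L^p$-strong convergence (after observing that strict convexity makes $\left(u^*_\infty(t),f(t,u^*_\infty(t))\right)$ an extremal point of the epigraph of $f(t,\cdot)$), whereas you start from the in-measure formulation of Visintin's result and supply the Scheff\'e/Vitali equi-integrability upgrade yourself --- a harmless, slightly more laborious, elaboration of the same argument.
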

\begin{proof}
Up to a subsequence, we may assume that $u^*_N\weak_{L^p}u^*_\infty$ as $N\to\infty$, and the limit is a minimizer of $\J^\infty$ owing to Theorem~\ref{thm:G_conv}.
In view of Hypothesis~\ref{ass:strict_conv}, we have that, for a.e. $t\in[0,T]$,
the point $\left(u^*_\infty(t),f(t,u^*_\infty(t))\right)\in\R^k\times\R$ is an extremal point for the epigraph of the function $w\mapsto f(t,w)$. Therefore, Corollary~\ref{cor:conv_integral_term} guarantees that the hypotheses of \cite[Theorem~3]{Vis84} are satisfied, yielding $\|u^*_N - u^*_\infty\|_{L^p}\to 0$ as $N\to\infty$.
\end{proof}

\begin{remark} \label{rmk:comment_conv}
    We report that the convergence result concerning the minimizers $(u^*_N)_{N\in\NN}$ of the functionals $\J^N$ has a \emph{qualitative nature}, in the sense that we are not currently able to provide a rate for the decay of the quantity $\inf_{u^*_\infty\in\mathcal{M}}\|u^*_N-u^*_\infty\|_{L^p}$, where $\mathcal{M}:=\arg\min_\U \J^\infty$.
    We expect that some additional hypotheses are required, like for example that $(u^*_N)_{N\in\NN}$ is contained in a region where the functionals $(\J^N)_{N\in\NN}$ are uniformly strongly convex. We leave this open question for further investigation.\\
    Another intriguing point concerns the dimension of set of parameters $\Theta$. Indeed, if $\dim (\Theta)=l$, we have that the size of an $\varepsilon$-net is $O(\varepsilon^{-l})$, and this exponential scaling is known in literature as the \emph{curse of dimensionality}. A possible way for mitigating this phenomenon could be the introduction of randomness in the construction of $\varepsilon$-nets. 
    We plan to explore this approach in future studies.
\end{remark}

The last result shall play a crucial role in the next section, where we establish the Pontryagin Maximum Principle for minimax problems involving infinite ensembles.

\section{Pontryagin Maximum Principle via $\Gamma$-convergence} \label{sec:PMP}

In this section, we derive necessary optimality conditions in the form of Pontryagin's Maximum Principle (PMP) for local minimizers of the functional $\J:\U\to\R$ defined as in \eqref{eq:def_fun_minimax}. Here, we set $\U:=L^p([0,T],\R^k)$, and the exponent $p\in(1,\infty)$ is chosen according to the growth condition of the running-cost integrand (see Hypothesis~\ref{ass:running_cost}-(iv)).
The main result of this part is the establishment of the PMP in the case the set $\Theta$ consists of infinitely-many points.
Following a similar strategy as in \cite{Scag23} (where only problems with averaged-cost were considered), we deduce the PMP with a limiting argument based on the $\Gamma$-convergence and on the sequential approximation of $\Theta$ with growing-in-size finite sets $\Theta^N\subset \Theta$. 
This argument allows us to extend to control-affine systems the PMP for minimax optimal control problems originally established in \cite{V05}. 
Indeed, on the one hand, we report that the minimax PMP established in \cite{V05} does not encompass control systems with unbounded set of admissible velocities (as it is the case for control-affine systems). 
On the other hand, the proof presented here relies on the affine dependence of the systems in the control variables.


Before proceeding, we need to formulate a further regularity assumption on the dynamics and on the terminal-time cost.

\begin{assumption}\label{ass:C1_reg}
The state-derivatives $(x,\theta)\mapsto \frac{\partial}{\partial x} F_0(x,\theta)$ and $(x,\theta)\mapsto \frac{\partial}{\partial x} F(x,\theta)$ of the maps defined in \eqref{eq:def_fields_maps} are continuous, as well as the gradient $(x,\theta)\mapsto \nabla_x a(x,\theta)$ of the function that designs the end-point cost in \eqref{eq:def_fun_minimax}.
\end{assumption}

For every $u\in \U$ and every $\theta\in \Theta$, we define the function $\lambda_u^\theta:[0,T]\to (\R^d)^*$ as the solution of the following differential equation
\begin{equation} \label{eq:def_lambda_theta_u}
\begin{cases}
\dot \lambda_u^\theta(t) = - \lambda_u^\theta(t)
\left( 
\frac{\partial F_0(x_u^\theta(t), \theta)}{\partial x}
+ \sum_{i=1}^ku_i(t) 
\frac{\partial F_i(x_u^\theta(t), \theta)}{\partial x} 
\right),\\
\lambda_u^\theta(T) = -\nabla_x a(x_u^\theta(T)),\theta),
\end{cases}
\end{equation}
where the curve $x_u^\theta:[0,1]\to\R^d$ is the solution of the Cauchy problem \eqref{eq:ens_ctrl_sys} corresponding to the system identified by $\theta$ and to the  control $u$. 
We insist on the fact that in this paper (except where explicitly specified) $\lambda_u^\theta$ is always understood as a row-vector, as well as any other element of $(\R^d)^*$. The existence and the uniqueness of the solution of \eqref{eq:def_lambda_theta_u} follow as a standard application of the Carath\'eodory Theorem (see, e.g., \cite[Theorem~5.3]{H80}).
Similarly as done in the previous subsection, for every $u\in\U$ we introduce the function $\Lambda_u:[0,T]\times \Theta\to (\R^d)^*$ defined as 
\begin{equation} \label{eq:def_Lambda}
\Lambda_u(t,\theta):= \lambda_u^\theta(t).
\end{equation}
In the case of a sequence of weakly convergent controls $(u_n)_{n\in\NN}$, for the corresponding sequence $(\Lambda_{u_n})_{n\in\NN}$ we can establish a result analogue to Proposition~\ref{prop:unif_conv_map_X}.
Exactly as it was for Proposition~\ref{prop:unif_conv_map_X}, the proof follows from an immediate adaptation to $\U=L^p$ ($1<p<\infty$) of the case $\U=L^2$ detailed in \cite[Section~2.3]{Scag23}.

\begin{proposition} \label{prop:unif_conv_map_Lambda}
Under Hypotheses~\ref{ass:fields}-\ref{ass:running_cost} and~\ref{ass:C1_reg}, let $(u_n)_{n\in\NN} \subset \U$ be a sequence of controls such that $u_n\weak_{L^p}u_\infty$ as $n\to\infty$.
For every $n\in \Next$, let $\Lambda_n:[0,T]\times \Theta\to (\R^d)^*$ be the application defined in \eqref{eq:def_Lambda} corresponding to the  control $u_n$.
Then, we have that
\begin{equation} \label{eq:unif_conv_map_Lambda}
\lim_{n\to\infty}\, \sup_{(t,\theta)\in [0,T]\times\Theta} |\Lambda_n(t,\theta) - \Lambda_\infty(t,\theta)|_2
=0. 
\end{equation}
\end{proposition}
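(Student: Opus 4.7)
The plan is to mirror the proof of Proposition~\ref{prop:unif_conv_map_X} (which handles the forward trajectories $X_n$) but applied to the backward adjoint equation \eqref{eq:def_lambda_theta_u}, in the same spirit as the arguments of \cite[Section~2.3]{Scag23}. No Hilbert structure on $\U$ is invoked, only reflexivity of $\U$ and the compact embedding $W^{1,p}\hookrightarrow C^0$: this is why the passage from $L^2$ to $L^p$ is routine. I would split the argument into (i) equi-boundedness of the family $(\Lambda_n)_{n\in\Next}$, (ii) a decomposition of $\Lambda_n-\Lambda_\infty$ into parts that are either absorbed by Gr\"onwall, controlled by the uniform convergence in Proposition~\ref{prop:unif_conv_map_X}, or genuinely weak-convergence terms, and (iii) an Arzel\`a-Ascoli-type upgrade from pointwise to uniform convergence for the latter.

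For (i), from $u_n\weak_{L^p}u_\infty$ one gets $R:=\sup_n\|u_n\|_{L^p}<\infty$. By Proposition~\ref{prop:unif_conv_map_X}, the set $\{X_n(t,\theta):n\in\Next,(t,\theta)\in[0,T]\times\Theta\}$ is contained in a fixed compact $K\subset\R^d$, so Hypothesis~\ref{ass:C1_reg} delivers uniform bounds $M$ on $|\partial_xF_i|$ over $K\times\Theta$ and on $|\nabla_x a|$. Applying Gr\"onwall backward to \eqref{eq:def_lambda_theta_u}, together with $\|u_n\|_{L^1}\leq T^{1/p'}R$, yields $|\Lambda_n(t,\theta)|_2\leq C$ uniformly in $n,t,\theta$; the same estimate applied between two time instants gives a uniform H\"older estimate in $t$ of exponent $1/p'$.

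For (ii), using the integral form of the backward ODE, I would write $\Lambda_n(t,\theta)-\Lambda_\infty(t,\theta)$ as the sum of the terminal difference (which tends to $0$ uniformly in $\theta$ by continuity of $\nabla_x a$ combined with Proposition~\ref{prop:unif_conv_map_X}), of a Gr\"onwall-absorbable term linear in $\Lambda_n-\Lambda_\infty$ with coefficient bounded by $M(1+|u_n(s)|_2)$, of a ``coefficient'' term featuring $\partial_xF_i(X_n,\theta)-\partial_xF_i(X_\infty,\theta)$ (controlled via the modulus of uniform continuity of $\partial_xF_i$ on $K\times\Theta$ and the $L^1$-bounded factor $1+|u_{n,i}|$), and of the weak-convergence remainders
\begin{equation*}
w_{n,i}(t,\theta):=\int_t^T\Lambda_\infty(s,\theta)\,\partial_xF_i(X_\infty(s,\theta),\theta)\,\bigl(u_{n,i}(s)-u_{\infty,i}(s)\bigr)\,ds.
\end{equation*}

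The main obstacle is step (iii), namely showing $\sup_{(t,\theta)}|w_{n,i}(t,\theta)|\to 0$. For each fixed $(t,\theta)$, the test function $s\mapsto\chi_{[t,T]}(s)\,\Lambda_\infty(s,\theta)\,\partial_xF_i(X_\infty(s,\theta),\theta)$ belongs to $L^{p'}([0,T])$, so $w_{n,i}(t,\theta)\to 0$ follows directly from weak convergence. Since the underlying kernel is continuous on the compact set $[0,T]\times\Theta$, hence uniformly continuous, and since $\|u_n-u_\infty\|_{L^p}\leq 2R$, the family $(w_{n,i})_n$ is equi-continuous on $[0,T]\times\Theta$; Arzel\`a-Ascoli then promotes pointwise convergence to uniform convergence. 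Assembling the four pieces I obtain an inequality of the form
\begin{equation*}
\sup_\theta|\Lambda_n(t,\theta)-\Lambda_\infty(t,\theta)|_2\leq \eta_n + M\int_t^T\bigl(1+|u_n(s)|_2\bigr)\,\sup_\theta|\Lambda_n(s,\theta)-\Lambda_\infty(s,\theta)|_2\,ds
\end{equation*}
with $\eta_n\to 0$, and a final backward Gr\"onwall application delivers \eqref{eq:unif_conv_map_Lambda}.
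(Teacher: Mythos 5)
Your argument is correct and is essentially the same one the paper relies on: the paper's proof is just a citation to \cite[Proposition~2.5]{Scag23} (with the remark that the passage from $L^2$ to $L^p$ only uses reflexivity and the compact embedding $W^{1,p}\hookrightarrow C^0$), and your Gr\"onwall decomposition of $\Lambda_n-\Lambda_\infty$, with the weak-convergence remainder handled by pointwise convergence plus equicontinuity and Arzel\`a--Ascoli, is precisely the content of that cited proof. No gaps.
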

\begin{proof}
See \cite[Proposition~2.5]{Scag23}.
\end{proof}

We recall below the necessary optimality conditions in the case of a finite ensemble of control systems.
Given a Polish space $A$, we denote by $\mathcal{P}(A)$ the space of Borel probability measures supported in $A$.
We recall that the compact set $\Theta\subset\R^l$ is equipped with the usual Euclidean distance.
Moreover, in case of a finite subset $\Theta^N\subset \Theta$, we put on $\Theta^N$ the topology induced by $\Theta$, i.e. the discrete topology. In this way, denoting with $\iota:\Theta^N\to\Theta$ the inclusion, we can embed every $\mu_N\in\mathcal{P}(\Theta^N)$ into $\mathcal{P}(\Theta)$ using the push-forward $\iota_\#\mu_N$. With a slight abuse of notations, we shall omit $\iota_\#$ also when we understand $\mu_N\in\mathcal{P}(\Theta)$.

\begin{theorem}\label{thm:PMP_finite}
Under Hypotheses~\ref{ass:fields}-\ref{ass:running_cost} and~\ref{ass:C1_reg}, let $u^*_N\in \U$ be a local minimizer for the functional $\J^N:\U\to\R$ defined in \eqref{eq:def_fun_minimax_N}, where $\Theta^N\subset \Theta$ consists of \emph{finitely many elements}.
Finally, let $X_N:[0,T]\times \Theta\to\R^d$ and $\Lambda_N:[0,T]\times \Theta\to(\R^d)^*$ be the mappings defined, respectively, in \eqref{eq:def_evol_ens} and in \eqref{eq:def_Lambda}. 
Then, there exists a probability measure $\mu_N\in\mathcal{P}(\Theta^N)$ such that
\begin{equation}\label{eq:max_cond_N}
u^*_N(t) \in \arg \max_{\!\!\!\!\!\!
\!\!\!\!\!\! v\in \R^k} \left\{
\sum_{\theta\in \Theta^N} \mu_N(\{\theta\}) 
\Lambda_N(t,\theta)\cdot F(X_N(t,\theta),\theta)\cdot v - f(t,v)
\right\},
\end{equation}
and
\begin{equation}\label{eq:supp_N}
\theta \in \mathrm{supp}(\mu_N) \implies
a(X_N(T,\theta),\theta)=
\max_{\vartheta\in\Theta^N} a(X_N(T,\vartheta),\vartheta).
\end{equation}
\end{theorem}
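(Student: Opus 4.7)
The plan is to reformulate the non-smooth minimax problem \eqref{eq:def_fun_minimax_N} via its epigraph as a smoothly constrained optimization with finitely many inequality constraints, apply a Karush–Kuhn–Tucker rule to produce the probability measure $\mu_N$, and then recognize the stationarity condition in $u$ as a classical Pontryagin Maximum Principle for a weighted finite ensemble. Concretely, I would introduce a scalar variable $s\in\R$ and consider the auxiliary problem on $\U\times\R$ of minimizing $\tilde\J^N(u,s):= s + \int_0^T f(t,u(t))\,dt$ subject to $g_\theta(u,s) := a(X_u(T,\theta),\theta) - s \leq 0$ for every $\theta\in\Theta^N$. Since the inner minimization in $s$ reproduces exactly $\J^N(u)$, the pair $(u^*_N, s^*_N)$ with $s^*_N := \max_{\theta\in\Theta^N} a(X_{u^*_N}(T,\theta),\theta)$ is a local minimizer of this constrained auxiliary problem whenever $u^*_N$ is a local minimizer of $\J^N$.

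Applying a Karush–Kuhn–Tucker-type rule for finitely many inequality constraints in the Banach space $\U\times\R$ yields nonnegative scalars $\lambda_0$ and $\tilde\mu(\{\theta\})$ for $\theta\in\Theta^N$, not all zero, together with complementary slackness. Stationarity in the $s$-direction immediately gives $\lambda_0 = \sum_{\theta}\tilde\mu(\{\theta\})$, which rules out the abnormal case $\lambda_0=0$ and permits the normalization $\lambda_0 = 1$, so that $\mu_N(\{\theta\}) := \tilde\mu(\{\theta\})$ lies in $\mathcal{P}(\Theta^N)$. The complementary slackness identity $\tilde\mu(\{\theta\})\, g_\theta(u^*_N, s^*_N) = 0$ is precisely the support property \eqref{eq:supp_N}. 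Moreover, stationarity in the $u$-direction shows that $u^*_N$ is a critical point of the weighted-ensemble functional
\begin{equation*}
u \mapsto \sum_{\theta\in\Theta^N}\mu_N(\{\theta\})\, a(X_u(T,\theta),\theta) + \int_0^T f(t,u(t))\,dt,
\end{equation*}
a standard finite-ensemble optimal control problem with additive convex-in-control running cost.

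For this weighted problem I would then apply the classical PMP via needle variations: combining the variational identity $\delta_u a(X_{u^*_N}(T,\theta),\theta)[v] = -\int_0^T \Lambda_N(t,\theta)\cdot F(X_N(t,\theta),\theta)\cdot v(t)\,dt$, where $\Lambda_N(\cdot,\theta)$ is the adjoint curve defined in \eqref{eq:def_lambda_theta_u}, with the convexity of $f(t,\cdot)$ gives the pointwise Hamiltonian maximization \eqref{eq:max_cond_N}. The main technical obstacle is the simultaneous handling of the Banach-space multiplier rule and the possibly non-differentiable running cost $f$: the finiteness of $\Theta^N$ keeps the constraint set manageable and normality essentially automatic, while the Pontryagin needle-variation technique avoids any differentiability requirement on $f$ and yields the maximum condition directly, rather than through a subdifferential inclusion that would need separate justification.
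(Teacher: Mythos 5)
Your route is genuinely different from the paper's. The paper does not re-derive a multiplier rule: it augments the state with $\dot y_u(t)=f(t,u(t))$, rewrites $\J^N(u)=\max_{\theta\in\Theta^N}g\big((x^\theta_u(T),y_u(T)),\theta\big)$ with $g((x,y),\theta)=a(x,\theta)+y$ so that the cost is purely terminal, and then invokes the finite-index minimax PMP of \cite[Proposition~2.1]{V05}, reading off $\mu_N$, the adjoint system \eqref{eq:def_lambda_theta_u} and the support condition directly from that statement. Your epigraph/Fritz--John reformulation is a legitimate alternative, and the multiplier bookkeeping is right: stationarity in $s$ forces $\lambda_0=\sum_{\theta}\tilde\mu(\{\theta\})>0$ (normality, consistent with Remark~\ref{rmk:PMP_N}), and complementary slackness is exactly \eqref{eq:supp_N}.

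The gap is in the final step. The multiplier rule delivers only that $u^*_N$ is a \emph{critical point} of the weighted functional $J_{\mu_N}(u)=\sum_{\theta}\mu_N(\{\theta\})\,a(X_u(T,\theta),\theta)+\int_0^Tf(t,u(t))\,dt$; it is in general \emph{not} a local minimizer of it. (Take $\Theta^N=\{\theta_1,\theta_2\}$ with both terminal costs equal to $M$ at $u^*_N$ and equal to $M+\delta$ and $M-3\delta$ at a nearby control: the maximum increases while the $\mu_N$-average decreases. The paper's Remark~\ref{rmk:measure_meaning} accordingly calls $u^*$ only a normal \emph{extremal} of the weighted problem.) Needle variations test local minimality, so they cannot be applied to $J_{\mu_N}$ to obtain \eqref{eq:max_cond_N}; the claim that they ``yield the maximum condition directly'' is where the argument breaks. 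What actually closes it is the structure you set out to avoid: stationarity in $u$ localizes, via the subdifferential of the convex integral functional, to the inclusion $q_N(t)\in\partial_{u^*_N(t)}f(t,\cdot)$ for a.e.\ $t$, with $q_N(t)=\sum_{\theta}\mu_N(\{\theta\})\Lambda_N(t,\theta)\cdot F(X_N(t,\theta),\theta)$, and since $v\mapsto q_N(t)\cdot v-f(t,v)$ is concave (affine dynamics plus convex running cost), this inclusion is \emph{equivalent} to the maximization \eqref{eq:max_cond_N} --- precisely the mechanism the paper uses when passing to the limit in Theorem~\ref{thm:PMP_infinite}. A secondary point you would need to justify is the applicability of a nonsmooth multiplier rule in $\U\times\R$: under Hypothesis~\ref{ass:running_cost}-(iv) alone the functional $u\mapsto\int_0^Tf(t,u(t))\,dt$ may take the value $+\infty$ arbitrarily close to $u^*_N$, so its local Lipschitz continuity (or even the nonemptiness of its subdifferential there) is not automatic.
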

\begin{proof}
The result follows as an application of \cite[Proposition~2.1]{V05}.
Namely, we first define for every $\theta\in \Theta^N$ a new trajectory in $\R$ satisfying: 
\begin{equation*}
    \begin{cases}
    \dot y^\theta_u(t) = f(t,u(t)) & \mbox{a.e. in }[0,T],\\
    y^\theta_u(0) = 0,
    \end{cases}
\end{equation*}
and then we rephrase $\J^N(u)= \max_{\theta\in \Theta^N} g \big( (x_u^\theta(T),y_u^\theta(T)), \theta \big)$, where $\big((x,y),\theta \big)\mapsto g\big((x,y),\theta \big):=a(x,\theta) + y$, in order to deal only with terminal-state costs. 
Moreover, adapting the notations employed in \cite{V05} with ours, we have the following expression for the Hamiltonian:
\begin{equation}\label{eq:hamiltonian_proof}
    H\big( t, (x,y),(\lambda,\bar \lambda), u, \theta \big) = \lambda \cdot  (F_0(x,\theta) + F(x,\theta)u) + \bar\lambda f(t,u),
\end{equation}
where $\bar\lambda\in \R$ is the extra adjoint variable associated to the newly-introduced state $y$.
Moreover, following \cite[Proposition~2.1]{V05}, we set 
\begin{equation}\label{eq:function_G}
    G\big((x,y),\theta \big) := 
    \begin{cases}
        (\nabla_x a(x,\theta), 1) & \mbox{if } a(x,\theta) = \max_{\vartheta\in \Theta^N} a(x,\vartheta),\\
        \emptyset & \mbox{otherwise},
    \end{cases}
\end{equation}
where we used that $\theta \in \arg \max_{\vartheta\in \Theta^N} a(x,\vartheta) \iff \theta\in\arg \max_{\vartheta\in \Theta^N} g\big((x,y),\vartheta \big)$.
Then, since there are no terminal-state constraints, we have that $N\big((x,y),\theta \big)= \emptyset$ for every $(x,y)\in\R^d\times\R$ and for every $\theta\in\Theta^N$.
We recall that in \cite{V05} $N\big((x,y),\theta \big)$ denotes the set that contains the elements of norm $1$ in the normal cone to the constraint at $(x,y)$.
In this way, it descends that the probability measure $\mu_N\in\mathcal{P}(\Theta^N)$ prescribed by \cite[Proposition~2.1]{V05} is such that $\mathrm{supp} (\mu_N) \subset \{\theta\in \Theta^N: G\big((x^\theta_{u^*_N}(T),y^\theta_{u^*_N}(T)),\theta \big) \neq \emptyset \} = \arg\max_{\theta\in \Theta^N}a(x^\theta_{u^*_N}(T),\theta)$.
Hence, for $\theta \in \mathrm{supp} (\mu_N)$, we obtain that for a.e. $t\in[0,T]$
\begin{equation*}
    \begin{split}
        -\dot \lambda_{u^*_N}^\theta(t) & \in  \mathrm{co}\, \partial_x H\big(t,(x^\theta_{u^*_N}(t),y^\theta_{u^*_N}(t)), (\lambda^\theta_{u^*_N}(t),\bar\lambda^\theta_{u^*_N}(t)), u^*_N, \theta \big)\\
        & \qquad = {\textstyle \left\{ - \lambda_{u^*_N}^\theta(t)
\left( 
\frac{\partial F_0(x_{u^*_N}^\theta(t), \theta)}{\partial x}
+ \sum_{i=1}^k{u^*_{N,i}}(t) 
\frac{\partial F_i(x_{u^*_N}^\theta(t), \theta)}{\partial x} 
\right) \right\}},\\
-\dot{\bar \lambda}_{u^*_N}^\theta(t) & \in  \mathrm{co}\, \partial_y H\big(t,(x^\theta_{u^*_N}(t),y^\theta_{u^*_N}(t)), (\lambda^\theta_{u^*_N}(t),\bar\lambda^\theta_{u^*_N}(t)), u^*_N, \theta \big) = \{0\},
    \end{split}
\end{equation*}
with the final-time datum
\begin{equation*}
    -(\lambda_{u^*_N}^\theta(T), \bar \lambda_{u^*_N}^\theta(T) ) \in G\big((x^\theta_{u^*_N}(T),y^\theta_{u^*_N}(T)),\theta \big) = \left\{(\nabla_x a(x^\theta_{u^*_N}(T),\theta), 1)\right\}.
\end{equation*}
We observe that $\bar \lambda_{u^*_N}^\theta(t)=-1$ for every $t\in [0,T]$ and every $\theta\in\mathrm{supp}(\mu_N)$, and that $\lambda_{u^*_N}^\theta$ solves \eqref{eq:def_lambda_theta_u} for every $\theta\in\mathrm{supp}(\mu_N)$.
Finally, the maximization of the Hamiltonian in \cite[Proposition~2.1]{V05} yields
\begin{equation*}
    \begin{split}
        &\int_{\Theta^N}
        H\big(t,(x^\theta_{u^*_N}(t),y^\theta_{u^*_N}(t)), (\lambda^\theta_{u^*_N}(t),-1), u^*_N(t), \theta \big)
        \,d\mu_N(\theta)  \\
        &\qquad =
        \max_{v\in\R^k}
        \int_{\Theta^N}
        H\big(t,(x^\theta_{u^*_N}(t),y^\theta_{u^*_N}(t)), (\lambda^\theta_{u^*_N}(t),-1), v, \theta \big)
        \,d\mu_N(\theta)\\
        &\qquad 
        = \max_{v\in\R^k}
        \int_{\Theta^N}
        \left(
        \lambda^\theta_{u^*_N}(t) \cdot  (F_0(x^\theta_{u^*_N}(t),\theta) + F(x^\theta_{u^*_N}(t),\theta)v) - f(t,v)
        \right)
        \,d\mu_N(\theta).
\end{split}
\end{equation*}
Recalling the definitions of $X_N$ and $\Lambda_N$ in \eqref{eq:def_evol_ens} and in \eqref{eq:def_Lambda} respectively, we deduce the thesis.
\end{proof}

\begin{remark}
    We want to provide here an intuition for the appearance of the measure $\mu_N$ in Theorem~\ref{thm:PMP_finite}.
    In what follows, without loss of generality, we shall assume for simplicity that $J^N(u)= G\big( (x_u^\theta(T), \theta)_{\theta\in\Theta^N} \big)$, i.e., the cost is expressed in terms of the terminal states of the trajectories of the ensemble. We recall that, roughly speaking, when the dynamics and the terminal cost function $G$ are smooth, the PMP can be established by constructing the cone of the (infinitesimal) perturbations of the optimal trajectories, and by showing that the scalar product of its elements with the gradient of the terminal cost is non-negative. This approach can be pursued if e.g. $G\big( (x_u^\theta(T), \theta)_{\theta\in\Theta^N} \big) = \sum_{\theta\in\Theta^N}\nu_{\theta} a(x^\theta_{u}(T),\theta)$ with $\nu_\theta\in\R$ for every $\theta\in\Theta^N$.
    In our case, this approach is complicated by the fact that $G\big( (x_u^\theta(T), \theta)_{\theta\in\Theta^N} \big) = \max_{\theta\in\Theta^N} a(x_u^\theta(T),\theta)$ is non-smooth in the state variables. 
    In order to avoid further technicalities, let us assume in addition that the functions $x\mapsto a(x,\theta)$ are convex for every $\theta\in\Theta^N$, so that we can rely on the classical tools for subdifferential calculus.
    In this scenario, the non-negativity condition on the scalar product is generalized by requiring that \emph{there exists} an element in $\partial G\big( (x_{u^*_N}^\theta(T), \theta)_{\theta\in\Theta^N} \big)$ for which the scalar product inequality holds. Finally, we recall that
    \begin{equation*}
    \begin{split}
        \partial G\big( (x_{u^*_N}^\theta(T), \theta)_{\theta\in\Theta^N} \big) \subset &
        \Big\{
        \big( \nu_\theta \nabla_x a(x_{u^*_N}^\theta(T), \theta) \big)_{\theta\in\Theta^N}:
        \nu_\theta\geq 0, \, {\textstyle \sum_{\theta\in\Theta^N} \nu_\theta = 1
        }, \\
        &\qquad \qquad \nu_\theta = 0 \mbox{ if } 
        a(x_{u^*_N}^\theta(T), \theta) < 
        G\big( (x_{u^*_N}^\theta(T), \theta)_{\theta\in\Theta^N} \big)
        \Big\}.
    \end{split}
    \end{equation*}
    Let $(\bar \nu_\theta)_{\theta\in\Theta^N}$ be the coefficients corresponding to an element of the subdifferential that makes non-negative the scalar products mentioned above. Then, we can set $\mu_N(\{\theta\}):=\bar\nu_\theta$ for every $\theta\in\Theta^N$.
    For further details on this viewpoint, we refer to \cite{CSW24}, where the PMP for finite ensembles was studied in the framework of Machine Learning modelling.
\end{remark}

\begin{remark}\label{rmk:PMP_N}
From the proof of Theorem~\ref{thm:PMP_finite} it follows that the functional $\J^N$ does not admit any abnormal extremal.
Indeed, this due to the fact that we are considering a problem without endpoint constraints, so that the set $N\big((x,y),\theta \big)$ that contains the unitary-length elements of the normal cone to the target is always reduced to the empty set. 
Moreover, we observe that the growth condition \eqref{eq:growth_f} on the running cost guarantees the existence of the maximum of the weighted Hamiltonian in \eqref{eq:max_cond_N}. However, if $f$ is not strictly convex, we may have that the $\arg\max$ in \eqref{eq:max_cond_N} is not reduced to a single point.
\end{remark}

\begin{remark}
We report that the argument employed in \cite[Proposition~2.1]{V05} embraces also the case of a non-smooth dynamics, and that \emph{its proof does not require the boundedness of admissible velocities}. 
Indeed, this hypothesis is formulated later in \cite{V05} (namely \cite[Assumption~S2]{V05}), and is needed to deduce the PMP for infinite ensembles.
\end{remark}

At this point, in view of Theorem~\ref{thm:PMP_finite}, a viable strategy to establish the Maximum Principle for infinite ensembles could be the following: first consider $\J^N\to_\Gamma\J$ (with $\J^N$ involving \emph{finite} ensembles), and then use Corollary~\ref{cor:strong_conv} to obtain the inclusion \eqref{eq:max_cond_N} in the limit. Unfortunately, not every (local) minimizer of $\J$ can be recovered as the limit of a sequence of (local) minimizers of $\J^N$, as illustrated in \cite[Remark~6.5]{Scag23}.
For this reason, given $u^*\in \U$ local minimizer for $\J$, we need to introduce an auxiliary perturbed functional $\widetilde \J:\U\to \R$ defined as follows:
\begin{equation}\label{eq:def_perturbed_fun}
\widetilde \J(u):= \sup_{\theta\in\Theta} a (X_u(T,\theta),\theta) + \int_0^T f(t,u(t))\, dt + \|u-u^*\|_{L^p}^p 
= \J(u) + \|u-u^*\|_{L^p}^p
\end{equation}
for every $u\in\U$, so that $u^*$ is an \emph{isolated} local minimizer for $\widetilde \J$.

\begin{lemma}\label{lem:isolated}
Under Hypotheses~\ref{ass:fields}-\ref{ass:running_cost} and~\ref{ass:C1_reg}, let $u^* \in\U$ be a local minimizer for the functional $\J$. Then, there exists $r>0$ such that $\widetilde J(u)>\widetilde J(u^*)$ for every $u\neq u^*$ satisfying $\|u-u^*\|_{L^p}\leq r$.
\end{lemma}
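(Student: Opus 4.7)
The proof plan is short because the conclusion is essentially built into the construction of $\widetilde\J$. The idea is to exploit the fact that the penalty term $\|u-u^*\|_{L^p}^p$ vanishes exactly at $u=u^*$ and is strictly positive elsewhere, while on a suitable $L^p$-neighborhood $\J$ is already bounded below by $\J(u^*)$.

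First I would unpack the meaning of ``$u^*$ is a local minimizer of $\J$'' in the space $\U=L^p([0,T],\R^k)$: by definition this provides some radius $r>0$ such that $\J(u)\geq \J(u^*)$ for all $u\in\U$ with $\|u-u^*\|_{L^p}\leq r$. I would then simply compute, for any such $u$ with $u\neq u^*$,
\begin{equation*}
\widetilde\J(u) \;=\; \J(u) + \|u-u^*\|_{L^p}^p \;\geq\; \J(u^*) + \|u-u^*\|_{L^p}^p \;>\; \J(u^*) \;=\; \widetilde\J(u^*),
\end{equation*}
where the first inequality uses the local-minimality of $u^*$ for $\J$, the strict inequality uses that $\|u-u^*\|_{L^p}^p>0$ whenever $u\neq u^*$, and the last equality is the definition of $\widetilde\J$ at $u^*$.

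There is no genuine obstacle here: the perturbation $\|u-u^*\|_{L^p}^p$ was designed precisely to promote $u^*$ from a (possibly non-isolated) local minimizer of $\J$ to an \emph{isolated} local minimizer of $\widetilde\J$. The only thing worth pointing out in the final write-up is that $\widetilde\J$ is indeed well-defined and finite on the ball of radius $r$ around $u^*$ (this follows from Lemma~\ref{lem:well_defined} applied to $\J$, together with the obvious finiteness of the $L^p$-penalty), so the strict inequality above is meaningful rather than an equality between $+\infty$'s.
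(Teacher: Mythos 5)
Your argument is correct and coincides with the paper's own proof: both extract the radius $r$ from the local minimality of $u^*$ for $\J$ and then observe that $\widetilde\J(u)=\J(u)+\|u-u^*\|_{L^p}^p\geq \J(u^*)+\|u-u^*\|_{L^p}^p>\J(u^*)=\widetilde\J(u^*)$ for $u\neq u^*$ in the ball. The extra remark on finiteness is harmless but not needed for the strict inequality.
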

\begin{proof}
From the hypothesis, it follows that there exists $r>0$ such that $\J(u^*)\leq \J(u)$ for every $u$ such that $\|u-u^*\|_{L^p}\leq r$. Hence, recalling \eqref{eq:def_perturbed_fun}, we deduce that $\widetilde\J(u^*) = \J(u^*)\leq \J(u)< \widetilde\J(u)$ for every $u\neq u^*$ satisfying $\|u-u^*\|_{L^p}\leq r$.
\end{proof}

We now consider a sequence $(\Theta^N)_{N\in\NN}$ of \emph{finite subsets} of $\Theta$ so that $d_H(\Theta^N,\Theta)\to 0$ as $N\to\infty$, and we set $\widetilde \J^N:=\J^N+\|\cdot -u^*\|_{L^p}^p$.
Moreover, we define:
\begin{equation} \label{eq:def_ball}
\mathcal{X}:=B_r(u^*)=\{u\in\U: \|u-u^*\|_{L^p}\leq r\},
\end{equation}
where $r>0$ is provided by Lemma~\ref{lem:isolated}, and we restrict the functionals $\widetilde \J^N,\widetilde \J$ to $\mathcal \X$.
Owing to the results established in Section~\ref{sec:Gamma}, we can readily prove the following convergences.

\begin{proposition}\label{prop:G_conv_pert}
Under Hypotheses~\ref{ass:fields}-\ref{ass:running_cost} and~\ref{ass:C1_reg}, let $u^*\in\U$ be a local minimizer for the functional $\J:\U\to\R$ defined in \eqref{eq:def_fun_minimax}, and let $\widetilde \J,\widetilde \J^N:\X\to \R$ be the restrictions to $\X$ of the functionals introduced above. Then, $\widetilde \J^N\to_\Gamma \widetilde \J$ as $N\to\infty$ with respect to the $L^p$-weak topology. Moreover, if $u^*_N\in\X$ is a minimizer for the restriction $\widetilde \J^N|_\X$ for every $N\in\NN$, then $\|u^*_N -u^*\|_{L^p}\to0$ as $N\to\infty$.
\end{proposition}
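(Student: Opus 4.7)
The plan is to read off the result almost directly from Theorem~\ref{thm:G_conv} combined with the lower semi-continuity of the perturbation term $\|\cdot - u^*\|_{L^p}^p$, and then to upgrade weak convergence of the minimizers to strong convergence by a squeeze argument.

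For the $\Gamma$-convergence, the liminf inequality is immediate: given $(u_N)_N\subset\X$ with $u_N\weak_{L^p}u_\infty$, Theorem~\ref{thm:G_conv} yields $\J(u_\infty)\leq\liminf_N \J^N(u_N)$, while the norm $\|\cdot - u^*\|_{L^p}^p$ is convex and continuous, hence sequentially weakly lower semi-continuous, so $\|u_\infty - u^*\|_{L^p}^p\leq \liminf_N\|u_N - u^*\|_{L^p}^p$. Adding these, together with the superadditivity of $\liminf$, gives $\widetilde\J(u_\infty)\leq\liminf_N\widetilde\J^N(u_N)$. For the limsup inequality, I would take the same recovery sequence as in the proof of Theorem~\ref{thm:G_conv}, namely the constant one $u_N:=u$; then $\J^N(u)\to\J(u)$ by Lemma~\ref{lem:conv_maxima_gen} applied to the constant control, while the perturbation term equals $\|u - u^*\|_{L^p}^p$ for every $N$, so $\widetilde\J^N(u)\to\widetilde\J(u)$.

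For the convergence of the minimizers, I would first observe that $u^*$ is the \emph{unique} minimizer of $\widetilde\J$ on $\X$: indeed, by Lemma~\ref{lem:isolated}, $\widetilde\J(u)>\widetilde\J(u^*)$ for every $u\neq u^*$ with $\|u - u^*\|_{L^p}\leq r$, i.e.\ for every $u\in\X\setminus\{u^*\}$. Since $\X$ is a closed ball in a reflexive Banach space, it is weakly sequentially compact, so any subsequence of $(u^*_N)_N$ admits a further subsequence that weakly converges to some $u^*_\infty\in\X$. The standard consequence of $\Gamma$-convergence (\cite[Corollary~7.20]{D93}) ensures $u^*_\infty\in\arg\min_\X\widetilde\J$, whence $u^*_\infty=u^*$ by uniqueness, and by the Urysohn subsequence principle the whole sequence satisfies $u^*_N\weak_{L^p}u^*$.

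To upgrade to strong convergence, I would use the convergence of minimum values guaranteed by $\Gamma$-convergence, namely $\widetilde\J^N(u^*_N)\to\widetilde\J(u^*)=\J(u^*)$. Since $u^*_N\weak_{L^p}u^*$, the liminf condition applied to the unperturbed sequence $\J^N\to_\Gamma \J$ gives $\J(u^*)\leq\liminf_N\J^N(u^*_N)$. Writing $\|u^*_N - u^*\|_{L^p}^p = \widetilde\J^N(u^*_N)-\J^N(u^*_N)$, one obtains
\begin{equation*}
\limsup_{N\to\infty}\|u^*_N - u^*\|_{L^p}^p \leq \lim_{N\to\infty}\widetilde\J^N(u^*_N) - \liminf_{N\to\infty}\J^N(u^*_N) \leq \J(u^*) - \J(u^*) = 0,
\end{equation*}
which yields $\|u^*_N - u^*\|_{L^p}\to 0$.

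I do not anticipate any serious obstacle: all ingredients are in place. The only mildly subtle point is the promotion from weak to strong convergence of the minimizers, but it is handled cleanly by the squeeze above, exploiting precisely the role of the perturbation $\|\cdot-u^*\|_{L^p}^p$ added to $\J^N$; note that uniform convexity of $L^p$ (as invoked for Corollary~\ref{cor:strong_conv}) is not needed here, since the perturbation already forces $u^*_N$ towards $u^*$ in norm.
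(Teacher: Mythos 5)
Your argument is correct, and for the $\Gamma$-convergence and the weak convergence of the minimizers (uniqueness of the minimizer of $\widetilde\J$ on $\X$ via Lemma~\ref{lem:isolated}, compactness of the ball in the weak topology, and the Urysohn subsequence principle) it coincides with what the paper does. Where you genuinely diverge is the upgrade to strong convergence. The paper observes that $\widetilde\J^N$ and $\widetilde\J$ fall back into the general framework of Section~\ref{sec:Gamma} with the modified running cost $\tilde f(t,v):=f(t,v)+|v-u^*(t)|_2^p$, which is \emph{strictly} convex in $v$ even though $f$ is only convex; it then invokes Corollary~\ref{cor:strong_conv}, i.e.\ Visintin's theorem on strong convergence under strict convexity. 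You instead exploit the additive structure $\widetilde\J^N=\J^N+\|\cdot-u^*\|_{L^p}^p$ directly: the identity $\|u^*_N-u^*\|_{L^p}^p=\widetilde\J^N(u^*_N)-\J^N(u^*_N)$, the convergence of the minimum values $\widetilde\J^N(u^*_N)\to\J(u^*)$, and the liminf inequality for the unperturbed functionals squeeze the perturbation term to zero. Both arguments are valid under the same hypotheses; yours is more elementary and self-contained, since it bypasses the Visintin machinery entirely (and, as you note, does not need uniform convexity of $L^p$ either), while the paper's route is shorter on the page because Corollary~\ref{cor:strong_conv} is already available and it makes explicit that the Tikhonov-type perturbation restores exactly the strict convexity that the general strong-convergence result requires. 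The only points worth making explicit in a polished write-up are that the liminf inequality for $\J^N\to_\Gamma\J$ is applied on the ball $B_r(u^*)$ rather than on the ball centered at the origin used in Theorem~\ref{thm:G_conv} (harmless, since the proof of that inequality only uses Lemma~\ref{lem:conv_maxima_gen} and weak lower semi-continuity of the integral term, neither of which depends on the choice of bounded set), and that all quantities entering the squeeze are finite so that the subtraction of limits is legitimate.
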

\begin{proof}
The $\Gamma$-convergence descends directly from Theorem~\ref{thm:G_conv}. Moreover, since $u^*$ is the unique minimizer of $\widetilde \J$ when restricted to $\X$, it follows that any sequence of minimizers of $\widetilde \J^N|_\X$ is \emph{weakly convergent} (without extracting subsequences) to $u^*$. Finally, we observe that we can rewrite the running cost of $\widetilde \J, \widetilde \J^N$ as $\int_0^T \tilde f(t,u(t))\,dt$, where $\tilde f(t,v):=f(t,v) + |v-u^*(t)|_2^p$ for every $v\in\R^k$ and for a.e. $t\in[0,1]$.
Since $f:[0,T]\times\R^k\to\R$ is assumed to be convex in the second argument (see Hypothesis~\ref{ass:running_cost}), it turns out that $\tilde f$ is strictly convex. Therefore, Corollary~\ref{cor:strong_conv} guarantees the convergence in norm $u^*_N\to_{L^p}u^*$.
\end{proof}

We are now in position to prove the main result of this section.

\begin{theorem}\label{thm:PMP_infinite}
Under Hypotheses~\ref{ass:fields}-\ref{ass:running_cost} and~\ref{ass:C1_reg}, let $u^*\in\U$ be a local minimizer for the functional $\J:\U\to\R$ defined in \eqref{eq:def_fun_minimax}, and let $X:[0,T]\times \Theta\to\R^d$ and $\Lambda:[0,T]\times \Theta\to(\R^d)^*$ be the mappings defined, respectively, in \eqref{eq:def_evol_ens} and in \eqref{eq:def_Lambda}. 
Then, there exists a probability measure $\mu\in\mathcal{P}(\Theta)$ such that
\begin{equation}\label{eq:max_cond}
u^*(t) \in \arg \max_{\!\!\!\!\!\!
\!\!\!\!\!\! v\in \R^k} \left\{
\int_{\Theta} 
\Lambda(t,\theta)\cdot F(X(t,\theta),\theta)\cdot v \,d\mu(\theta) - f(t,v)
\right\},
\end{equation}
and
\begin{equation}\label{eq:supp}
\theta\in \mathrm{supp}(\mu) \implies
a(X(T,\theta),\theta)=
\max_{\vartheta\in\Theta} a(X(T,\vartheta),\vartheta).
\end{equation}
\end{theorem}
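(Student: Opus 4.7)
The strategy is to apply Theorem~\ref{thm:PMP_finite} (the finite-ensemble PMP) to the minimizers $u^*_N$ of $\widetilde\J^N|_\X$ provided by Proposition~\ref{prop:G_conv_pert}, and then pass to the limit $N\to\infty$ exploiting the strong convergence $u^*_N\to_{L^p} u^*$, the uniform convergences in Propositions~\ref{prop:unif_conv_map_X} and~\ref{prop:unif_conv_map_Lambda}, and the weak-$*$ compactness of $\mathcal{P}(\Theta)$.

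First I would apply Theorem~\ref{thm:PMP_finite} to $u^*_N$ viewed as a minimizer of $\widetilde\J^N|_\X$, whose integrand is $\tilde f(t,v) := f(t,v) + |v-u^*(t)|_2^p$. This yields a measure $\mu_N\in\mathcal{P}(\Theta^N)$, which we canonically embed in $\mathcal{P}(\Theta)$, together with the pointwise maximization condition
\begin{equation*}
u^*_N(t) \in \arg\max_{v\in\R^k}\left\{
\int_{\Theta}\Lambda_N(t,\theta)\cdot F(X_N(t,\theta),\theta)\cdot v\,d\mu_N(\theta) - f(t,v) - |v-u^*(t)|_2^p
\right\}
\end{equation*}
for a.e.\ $t\in[0,T]$, and the support condition that every $\theta\in\mathrm{supp}(\mu_N)$ achieves $\max_{\vartheta\in\Theta^N}a(X_N(T,\vartheta),\vartheta)$. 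Since $\Theta$ is compact, $\mathcal{P}(\Theta)$ is weak-$*$ compact, so up to a subsequence $\mu_N\rightharpoonup^*\mu$ for some $\mu\in\mathcal{P}(\Theta)$; passing to a further subsequence, Proposition~\ref{prop:G_conv_pert} also yields pointwise convergence $u^*_N(t)\to u^*(t)$ for a.e.\ $t\in[0,T]$.

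Next I would pass to the limit in the maximization condition. The uniform convergences $X_N\to X$ and $\Lambda_N\to\Lambda$ in $(t,\theta)$ (Propositions~\ref{prop:unif_conv_map_X} and~\ref{prop:unif_conv_map_Lambda}), together with the continuity of $F$, imply that for every $t\in[0,T]$ the map $\theta\mapsto \Lambda_N(t,\theta)\cdot F(X_N(t,\theta),\theta)$ converges uniformly on $\Theta$ to $\theta\mapsto \Lambda(t,\theta)\cdot F(X(t,\theta),\theta)$. Combined with $\mu_N\rightharpoonup^*\mu$, this gives
\begin{equation*}
\int_\Theta\Lambda_N(t,\theta)\cdot F(X_N(t,\theta),\theta)\cdot v\,d\mu_N(\theta)\longrightarrow \int_\Theta\Lambda(t,\theta)\cdot F(X(t,\theta),\theta)\cdot v\,d\mu(\theta)
\end{equation*}
for every fixed $v\in\R^k$ and every $t$. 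Using the pointwise a.e.\ convergence $u^*_N(t)\to u^*(t)$, the continuity of $f(t,\cdot)$, and the fact that $|u^*_N(t)-u^*(t)|_2^p\to 0$ a.e., I would rearrange the finite-$N$ maximization inequality against an arbitrary test point $v$ and take $\liminf_{N\to\infty}$ to obtain \eqref{eq:max_cond} at a.e.\ $t$.

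Finally, for the support condition \eqref{eq:supp}, I would argue exactly as in Remark~\ref{rmk:cluster_maxim_2}: for any $\theta^*\in\mathrm{supp}(\mu)$, weak-$*$ convergence of $\mu_N$ yields a sequence $\theta^*_N\in\mathrm{supp}(\mu_N)$ with $\theta^*_N\to\theta^*$, and each $\theta^*_N$ maximizes $a(X_N(T,\cdot),\cdot)$ over $\Theta^N$, so $\theta^*\in\arg\max_{\vartheta\in\Theta}a(X(T,\vartheta),\vartheta)$ by Lemma~\ref{lem:conv_maxima_gen} and the uniform convergence of $a(X_N(T,\cdot),\cdot)$. The main obstacle I anticipate is the joint limit in the maximization condition: the $\mu_N$-integral and the pointwise-in-$t$ behaviour of $u^*_N$ live in different topologies, so care is needed to justify the simultaneous passage to the limit; the decoupling provided by uniform convergence of $\Lambda_N$ and $X_N$ in both arguments is precisely what makes the two limits compatible.
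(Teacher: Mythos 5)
Your plan follows the paper's proof almost step for step: perturb to $\widetilde\J^N$, take minimizers $u^*_N$ of the restriction to the ball $\X$, apply the finite-ensemble PMP of Theorem~\ref{thm:PMP_finite}, extract $\mu_N\rightharpoonup^*\mu$ by Prokhorov, use the uniform convergence of $\Lambda_N\cdot F(X_N,\cdot)$ together with the a.e.\ convergence of $u^*_N$ to pass to the limit, and recover the support condition via the cluster-point argument of Remark~\ref{rmk:cluster_maxim_2}. Two points in your write-up need tightening.

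First, Theorem~\ref{thm:PMP_finite} is stated for \emph{local minimizers of $\widetilde\J^N$ on $\U$}, whereas you only know that $u^*_N$ minimizes the restriction $\widetilde\J^N|_\X$; a minimizer sitting on the boundary of the ball need not satisfy the unconstrained PMP. You must first use the strong convergence $u^*_N\to_{L^p}u^*$ to conclude that, for $N$ large enough, $u^*_N$ lies in the \emph{interior} of $\X$ and is therefore a genuine local minimizer of the non-restricted $\widetilde\J^N$; this is exactly the intermediate observation the paper makes before invoking the finite PMP.

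Second, the limit you describe does not directly yield \eqref{eq:max_cond}. Writing the finite-$N$ maximality of $u^*_N(t)$ against a test point $v$ and letting $N\to\infty$, the term $|u^*_N(t)-u^*(t)|_2^p$ on the optimal side vanishes (as you note), but the penalty $|v-u^*(t)|_2^p$ on the test side survives, so the limiting inequality only says that $u^*(t)$ maximizes $v\mapsto q(t)\cdot v - f(t,v)-|v-u^*(t)|_2^p$, not the unpenalized map in \eqref{eq:max_cond}. One further (easy) step is required: since $p>1$, the penalty $v\mapsto|v-u^*(t)|_2^p$ is differentiable with vanishing gradient at $v=u^*(t)$, so by the sum rule for subdifferentials the penalized first-order condition is equivalent to $q(t)\in\partial_{u^*(t)}f(t,\cdot)$, which is \eqref{eq:max_cond}. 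The paper avoids this issue by recasting the finite-$N$ condition as the subdifferential inclusion $q_N(t)\in\partial_{u^*_N(t)}f(t,\cdot)$ (via Rockafellar's sum rule) \emph{before} passing to the limit, and then concluding with the continuity of subdifferentials; your route of taking the limit in the inequality itself is equally viable once this residual penalty is explicitly removed.
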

\begin{proof}
As above, we introduce the functionals $\widetilde J,\widetilde J^N:\U\to\R$ and the ball $\X$ defined as in \eqref{eq:def_ball}, whose radius is chosen accordingly to Lemma~\ref{lem:isolated}. Using the same notations as in Proposition~\ref{prop:G_conv_pert}, for every $N\in\NN$ we set $u^*_N\in\X$ to be a minimizer of  the restriction $\widetilde J^N|_\X$. Since $u^*_N\to_{L^p}u^*$ as $N\to\infty$, we deduce that (when $N$ is large enough) $u^*_N$ lies in the interior of $\X$, so that $u^*_N$ is actually a \emph{local minimizer for the non-restricted $\widetilde J^N$}.
Finally, the $L^p$-strong convergence ensures that, up to the extraction of a not-relabeled subsequence, $u^*_N\to u^*$ a.e. in $[0,T]$.
We set $X_N:[0,T]\times \Theta\to\R^d$ and $\Lambda_N:[0,T]\times \Theta\to(\R^d)^*$ to be the mappings defined, respectively, in \eqref{eq:def_evol_ens} and in \eqref{eq:def_Lambda} and corresponding to the control $u^*_N$.
In virtue of Theorem~\ref{thm:PMP_finite}, recalling that the running cost for the functional $\widetilde \J^N$ is given by $\tilde f(t,v):=f(t,v)+ |v-u^*(t)|_2^p$ for every $v\in\R^k$ and a.e. $t\in[0,T]$, we deduce that there exists a probability measure $\mu_N\in\mathcal{P}(\Theta^N)\subset \mathcal{P}(\Theta)$ such that 
\begin{equation}\label{eq:max_cond_subd}
0\in \partial_{u^*_N(t)}\, g^t_N(\cdot)
\end{equation}
for every $t\in[0,T]\setminus Z_N$, where $Z_N\subset[0,T]$ is a set of Lebesgue measure zero, and
\begin{equation*}
g_N^t(v):= \tilde f(t,v) - 
\int_\Theta
\Lambda_N(t,\theta)\cdot F(X_N(t,\theta),\theta)
 \,d\mu_N(\theta)\cdot v.
\end{equation*} 
Then, recalling the definition of $\tilde f$, we can rewrite \eqref{eq:max_cond_subd} as
\begin{equation}\label{eq:max_cond_subd_2}
\int_\Theta
\Lambda_N(t,\theta)\cdot F(X_N(t,\theta),\theta)
 \,d\mu_N(\theta)
 \in \partial_{u^*_N(t)}\, \tilde f(t,\cdot)\iff 
q_N(t)
 \in \partial_{u^*_N(t)}\, f(t,\cdot)
\end{equation}
for every $t\in[0,T]\setminus Z_N$,
where $q_N:[0,T]\to(\R^d)^*$ is defined as:
\begin{equation} \label{eq:def_q_N}
q_N(t):= \int_\Theta
\Lambda_N(t,\theta)\cdot F(X_N(t,\theta),\theta)
 \,d\mu_N(\theta)
 - p|u^*_N(t)-u^*(t)|_2^{p-1}\frac{u^*_N(t)-u^*(t)}{|u^*_N(t)-u^*(t)|_2}.
\end{equation}
We report that in the passages above we applied the \textit{sum rule for subdifferentials} 
(see, e.g., \cite[Theorem~23.8]{R97}) to $\tilde f(t,\cdot)$, which is the sum of a (possibly non-smooth) convex function $f(t,\cdot)$ with a regular and strictly convex one.\\
Before passing to the limit in \eqref{eq:max_cond_subd_2}, we extract from the sequence of probability measures $(\mu_N)_{N\geq1}\subset\mathcal{P}(\Theta)$ a (not relabeled) sub\-sequence that admits a weak limit $\mu\in\mathcal{P}(\Theta)$. Here, we use the weak-$*$ topology induced by the pairing with continuous functions $C^0(\Theta,\R)$ (see \cite[Definition~3.5.1]{C05}), while the existence of a convergent sub\-sequence descends from Prokhorov Theorem \cite[Theorem~3.5.13]{C05}.
Moreover, combining Proposition~\ref{prop:unif_conv_map_Lambda} with  Proposition~\ref{prop:unif_conv_map_X} and Hypothesis~\ref{ass:fields}, it follows that
\begin{equation*}
\lim_{N\to\infty}
\sup_{t\in[0,T],\theta\in\Theta} \left|
\Lambda_N(t,\theta)\cdot F(X_N(t,\theta),\theta) -
\Lambda(t,\theta)\cdot F(X(t,\theta),\theta)
\right|,
\end{equation*}
where $X:[0,T]\times \Theta\to\R^d$ and $\Lambda:[0,T]\times \Theta\to(\R^d)^*$ are the maps corresponding to $u^*$. Therefore, for every $t\in[0,T]$, we have that
\begin{equation}\label{eq:conv_int_q_N}
\lim_{N\to\infty} \int_\Theta
\Lambda_N(t,\theta)\cdot F(X_N(t,\theta),\theta)
 \,d\mu_N(\theta) = 
 \int_\Theta
\Lambda(t,\theta)\cdot F(X(t,\theta),\theta)
 \,d\mu(\theta).
\end{equation}
Moreover, we denote with $Z_\infty\subset[0,T]$ the set of null Lebesgue measure where the pointwise convergence $u_N^*\to u^*$ fails, and we define $Z:=\bigcup_{N\geq1}Z_N\cup Z_\infty$.
Hence, for every $t\in[0,T]\setminus Z$, we have that
\begin{equation}\label{eq:conv_frac_q_N}
\lim_{N\to\infty} |u^*_N(t)-u^*(t)|_2^{p-1}\frac{u^*_N(t)-u^*(t)}{|u^*_N(t)-u^*(t)|_2} =0.
\end{equation}
Combining \eqref{eq:conv_int_q_N} and \eqref{eq:conv_frac_q_N} in \eqref{eq:def_q_N}, we obtain 
\begin{equation}\label{eq:conv_q_N}
\lim_{N\to\infty} q_N(t) = 
 \int_\Theta
\Lambda(t,\theta)\cdot F(X(t,\theta),\theta)
 \,d\mu(\theta) =:q(t)
\end{equation}
for every $t\in[0,T]\setminus Z$.
We are now in position to pass to the limit into \eqref{eq:max_cond_subd_2}: in virtue of the \textit{continuity of subdifferentials} \cite[Theorem~24.4]{R97}, since $q_N(t)\to q(t)$ and $u^*_N(t)\to u^*(t)$ as $N\to\infty$ for $t\in[0,T]\setminus Z$, we conclude that
\begin{equation*}
q(t) = \int_\Theta
\Lambda(t,\theta)\cdot F(X(t,\theta),\theta)
 \,d\mu(\theta) \in \partial_{u^*(t)}\, f(t,\cdot),
\end{equation*}
which is equivalent to \eqref{eq:max_cond}.
We are left to show that \eqref{eq:supp} holds.
To see that, we first observe that, if $\theta\in\mathrm{supp}(\mu)$, then there exists an increasing sequence $(N_j)_{j\geq 1}$ such that we may find $\theta_{N_j}\in\mathrm{supp}(\mu_{N_j})$ and $\theta_{N_j}\to \theta$ as $j\to\infty$. Then, in virtue of Remark~\ref{rmk:cluster_maxim_2}, we deduce that $\theta\in\arg\max_{\vartheta\in\Theta} a(X(T,\vartheta),\vartheta)$.
\end{proof}

\begin{remark}\label{rmk:measure_meaning}
The measure $\mu\in\mathcal{P}(\Theta)$ that appears in \eqref{eq:max_cond} is in charge of highlighting the values of the parameter where worst cases are attained, and of ``ranking'' them with a weight.
Moreover, it is interesting to observe that Theorem~\ref{thm:PMP_infinite} can be restated by saying that any local minimizer $u^*\in \U$ of the minimax functional $\J$ is as well a normal extremal for the following \emph{weighted} ensemble optimal control problem:
\begin{equation*}
    \J_\mu(u):= 
    \int_\Theta a(X_u(T,\theta),\theta)\, d\mu(\theta) 
    + \int_0^T f(t,u(t))\, dt \to \min,
\end{equation*}
see \cite[Theorem~6.3]{Scag23}.

\end{remark}

\section{Application to quantum control and numerical examples}
In this section, we apply the reduction argument provided by $\Gamma$-convergence to the problem of controlling a qubit whose resonance frequency is affected by uncertainty. More precisely, we consider the two-level system described by the Schr\"odinger equation
\begin{equation}\label{eq:qubit_sys}
i\dot\psi_u^\alpha(t) = \big(H_0^\alpha + H_1 u(t) \big)\psi_u^\alpha(t),
 \quad \psi_u^\alpha(0) =
\left(
\begin{matrix}
0\\
1
\end{matrix}
\right),
\end{equation}
with
\begin{equation*}
H_0^\alpha := \left(
\begin{matrix}
E+\alpha & 0 \\
0 & -E-\alpha
\end{matrix}
\right),
\quad
H_1 := \left(
\begin{matrix}
0 & 1 \\
1 & 0
\end{matrix}
\right),
\end{equation*}
where $\psi_u^\alpha:[0,T]\to\mathbb{C}^2$ is the wave function, $E>0$, and $\alpha\in[\alpha_0,\alpha_1]$ is the unknown quantity that parametrizes the ensemble. Finally, $u:[0,T]\to\R$ is the real-valued signal used to control the system.
In the recent paper \cite{RABS}, the authors considered the problem of designing a control such that, at the terminal instant $T$, the elements of the ensemble \eqref{eq:qubit_sys} are close to the state $\psi^{\mathrm{tar}}:=(1,0)^T$ (up to a phase, possibly depending on $\alpha$). In other words, the goal was to make $\inf_{\alpha\in [\alpha_0,\alpha_1]} \big|\langle \psi^{\mathrm{tar}} | \psi^{\alpha}_u(T) \rangle\big|$ as large as possible, and their main achievement consisted in a uniform controllability result for \eqref{eq:qubit_sys} (see \cite[Theorem~3]{RABS}), together with the explicit construction of families of controls to accomplish the task (see \cite[Remark~5]{RABS}). We summarize these results below, and we refer the reader to the original paper for the complete and more general statements.

\begin{theorem}[{[29, Theorem~3]}] \label{thm:qubit_control}
 Let us assume that $\alpha_0<0<\alpha_1$, and that $3(E+\alpha_0)\geq E+\alpha_1$. For every $\e_1,\e_2>0$, let us consider the control $u_{\e_1,\e_2}:[0,1/(\e_1\e_2)]\to \R$ defined as:
\begin{equation}\label{eq:control_qubit}
u_{\e_1,\e_2}(t):=
2\e_1\big(1-\cos(2\pi\e_1\e_2 t)\big)
\cos\left(
2Et
+\frac{(\alpha_0-\alpha_1)\sin(\pi\e_1\e_2 t)}{\pi\e_1\e_2}
+ (\alpha_0 + \alpha_1)t
\right),
\end{equation}
and let us denote with $\psi^\alpha_{\e_1,\e_2}$ the solution of \eqref{eq:qubit_sys} corresponding to $u_{\e_1,\e_2}$.
Then, for every $K_0\in\mathbb{N}$, for every compact $I'\subset (\alpha_0,\alpha_1)$, there exists $C_{K_0}>0$ and $\eta>0$ such that, for every $\alpha\in I'$ and $\e_1,\e_2\in (0,\eta)$, we have
\begin{equation*}
\bigg|
\psi^\alpha_{\e_1,\e_2}\left( \frac{1}{\e_1\e_2} \right)- (e^{i\zeta},0)^T
\bigg| < C_{K_0}\max\left\{
\frac{\e_2}{\e_1}, \frac{\e_1^{K_0-1}}{\e_2}
\right\}
\end{equation*}
for some $\zeta\in[0,2\pi]$ (possibly depending on $\alpha$).
\end{theorem}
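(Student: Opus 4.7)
The plan is to pass to the interaction picture, isolate the resonant part of the driving from the non-resonant ones by a combined rotating-wave and stationary-phase decomposition, and then treat the surviving effective two-level dynamics as a chirped adiabatic passage.

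First, I would set $\phi^\alpha_{\e_1,\e_2}(t):=e^{iH_0^\alpha t}\psi^\alpha_{\e_1,\e_2}(t)$, which obeys
\begin{equation*}
i\dot\phi^\alpha_{\e_1,\e_2}(t)=u_{\e_1,\e_2}(t)\begin{pmatrix}0 & e^{2i(E+\alpha)t}\\ e^{-2i(E+\alpha)t} & 0\end{pmatrix}\phi^\alpha_{\e_1,\e_2}(t),\qquad \phi^\alpha_{\e_1,\e_2}(0)=(0,1)^T;
\end{equation*}
since $e^{-iH_0^\alpha t}$ is a diagonal unitary, it is equivalent to show that $\phi^\alpha_{\e_1,\e_2}(1/(\e_1\e_2))$ is close to $(e^{i\zeta'},0)^T$ for some $\alpha$-dependent $\zeta'$. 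Writing $u_{\e_1,\e_2}(t)=A(t)\cos\Phi(t)$ with envelope $A(t):=2\e_1(1-\cos(2\pi\e_1\e_2 t))$ and chirped phase $\Phi(t):=2Et+\frac{(\alpha_0-\alpha_1)\sin(\pi\e_1\e_2 t)}{\pi\e_1\e_2}+(\alpha_0+\alpha_1)t$, and expanding each cosine as a pair of complex exponentials, the off-diagonal entries of the interaction Hamiltonian split into four terms carrying the phases $\pm\Phi(t)\pm 2(E+\alpha)t$. Only the combination $\varphi(t,\alpha):=2(E+\alpha)t-\Phi(t)$ becomes stationary: its derivative
\begin{equation*}
\Delta(t,\alpha):=2\alpha-(\alpha_0+\alpha_1)-(\alpha_0-\alpha_1)\cos(\pi\e_1\e_2 t)
\end{equation*}
vanishes at a unique instant $t^*(\alpha)\in(0,1/(\e_1\e_2))$ for every $\alpha\in(\alpha_0,\alpha_1)$, while the three remaining phases have derivatives uniformly bounded below in absolute value on $[0,1/(\e_1\e_2)]$. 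The compatibility hypothesis $3(E+\alpha_0)\geq E+\alpha_1$ enters here, to rule out spurious low-order multi-photon resonances that would produce additional stationary points.

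Next, I would bound the three non-resonant contributions via iterated integration by parts in the Duhamel expansion of $\phi^\alpha_{\e_1,\e_2}$. Each integration by parts trades a factor of $A(t)\sim\e_1$ for a derivative $A'(t)\sim\e_1^2\e_2$, and gains the reciprocal of a non-vanishing phase derivative (bounded away from zero uniformly for $\alpha\in I'$). After $K_0-1$ iterations, the remainder integrated over the window of length $1/(\e_1\e_2)$ is bounded by $C_{K_0}\e_1^{K_0-1}/\e_2$, producing the second entry of the $\max$ in the statement.

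Finally, the surviving resonant part of the interaction Hamiltonian generates an effective two-level system whose matrix elements depend only on the slow time $\tau:=\e_1\e_2 t\in[0,1]$. Rescaled in $\tau$, it is a chirped avoided crossing with Rabi coupling proportional to $A/\e_1$ and detuning proportional to $\Delta$ crossing zero at $\tau^*(\alpha):=\e_1\e_2 t^*(\alpha)$; the sweep rate in $\tau$ is $O(1)$, while the minimum spectral gap is $O(\e_1)$. Since $A$ vanishes at the endpoints $\tau=0$ and $\tau=1$, the instantaneous eigenvectors there coincide exactly with the computational basis states, and $\Delta$ reverses sign between them (one has $\Delta(0,\alpha)=2(\alpha-\alpha_0)>0$ and $\Delta(1/(\e_1\e_2),\alpha)=2(\alpha-\alpha_1)<0$). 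A quantitative adiabatic theorem then shows that the state $(0,1)^T$, which is the lower eigenvector at $\tau=0$, is adiabatically driven to the antipodal lower eigenvector $(e^{i\zeta},0)^T$ at $\tau=1$, with a non-adiabatic error of size $O(\e_2/\e_1)$. Uniformity in $\alpha\in I'$ follows from the compact inclusion $I'\subset(\alpha_0,\alpha_1)$, which keeps $\tau^*(\alpha)$ inside a compact sub-interval of $(0,1)$ where the envelope $A$ is bounded below by a positive constant, so that the minimal gap is controlled uniformly.

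The hardest step is the third: producing a genuinely uniform-in-$\alpha$ adiabatic estimate, because the effective Hamiltonian after the rotating-wave reduction still has $\alpha$-dependent instantaneous eigenprojectors whose derivatives must be controlled in $L^1$ by the inverse square of the spectral gap. In practice, one would iterate the near-identity averaging transformation $K_0-1$ times to obtain an effective normal form in which the non-resonant residue is of order $\e_1^{K_0-1}/\e_2$ before invoking the adiabatic theorem; the interplay between these two errors is what produces the structure $\max\{\e_2/\e_1,\e_1^{K_0-1}/\e_2\}$ appearing in the final bound.
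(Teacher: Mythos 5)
This statement is not proved in the paper at all: it is quoted (with simplified hypotheses) from Robin--Augier--Boscain--Sigalotti \cite{RABS}, Theorem~3, and the text explicitly refers the reader to that work for the proof. There is therefore no internal argument to compare yours against; the relevant comparison is with the strategy of \cite{RABS}, whose title already announces it: adiabatic plus rotating-wave approximations. Your skeleton matches it. The interaction-picture reduction is correct, the resonant phase $\varphi(t,\alpha)=2(E+\alpha)t-\Phi(t)$ is the right one, its derivative $\Delta(t,\alpha)=2\alpha-(\alpha_0+\alpha_1)-(\alpha_0-\alpha_1)\cos(\pi\e_1\e_2 t)$ does sweep monotonically from $2(\alpha-\alpha_0)>0$ to $2(\alpha-\alpha_1)<0$ with a unique zero, and the chirped adiabatic passage in the slow time $\tau=\e_1\e_2 t$, with minimal gap of order $\e_1$ at the crossing (bounded below uniformly for $\alpha\in I'$ because $\tau^*(\alpha)$ stays in a compact subset of $(0,1)$ where the envelope is bounded below) and non-adiabatic error of order $\e_2/\e_1$, correctly accounts for the first entry of the $\max$.

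The middle step, however, does not hold up as written, and it is where the real work of \cite{RABS} lies. First, the bookkeeping ``each integration by parts trades $A\sim\e_1$ for $A'\sim\e_1^2\e_2$, so $K_0-1$ iterations over a window of length $1/(\e_1\e_2)$ give $\e_1^{K_0-1}/\e_2$'' is not consistent: iterating by parts on the first Duhamel term gains a factor $\e_1\e_2$ per step and yields $\e_1^{k}\e_2^{k-1}$ after $k$ steps, not $\e_1^{K_0-1}/\e_2$; more importantly, the non-resonant part of the Hamiltonian cannot be disposed of term by term in the Dyson series, because it recouples through the resonant evolution. The mechanism that actually produces the exponent is the one you only gesture at in your closing paragraph: an iterated near-identity averaging (normal-form) conjugation, each step of which reduces the non-resonant residue by one power of $\e_1$ while generating new combination frequencies, followed by a Gronwall estimate over the window, giving $\e_1^{K_0}/(\e_1\e_2)=\e_1^{K_0-1}/\e_2$. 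Second, you place the hypothesis $3(E+\alpha_0)\geq E+\alpha_1$ in the wrong spot: the first-order counter-rotating phases already have derivatives bounded below by $4(E+\alpha_0)>0$ without it, so it is not needed to exclude stationary points at first order; it is consumed in the iterated normal form, to guarantee that the combination frequencies generated at higher order remain uniformly non-resonant. So your proposal is a sound outline of the cited proof, but the two quantitative steps that make the theorem true are asserted rather than established.
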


In this section, inspired by the previous result, we study numerically the minimax problem:
\begin{equation}\label{eq:def_qubit_fun}
\J(u):= \max_{\alpha\in[\alpha_0,\alpha_1]} \left(
1- \big|\langle \psi^{\mathrm{tar}} | \psi^{\alpha}_u(T) \rangle\big|^2
\right) + \gamma \| u \|_{L^2}^2\to \min,
\end{equation}
and we compare the outcomes with the ones corresponding to controls prescribed by \eqref{eq:control_qubit}, with $\e_1,\e_2$ satisfying $T=1/(\e_1\e_2)$.
We observe that tuning the parameter $\gamma>0$ is an essential phase in this situation, where we first have to define a suitable optimal control problem and then numerically solve it. Indeed, on the one hand, high values of $\gamma$ result in high convexity and coercivity of the functional $\J$, which in turn reflects good numerical tractability.
On the other hand, for large $\gamma$, we may expect that the minimizers of \eqref{eq:def_qubit_fun} tend to perform poorly on the final-time minimax term.
Hence, as is often the case with Tikhonov-regularized minimization problems, properly setting $\gamma$ traduces in finding a trade-off between the numerical properties of the problem we want to solve and how the optimal controls behave on the minimax part.
In practice, a viable option consists in adaptively adjusting $\gamma$ by starting from a reasonably large value, and then progressively diminishing it until the minimax term evaluated on the numerical solutions matches a desired level.  
In virtue of Theorem~\ref{thm:G_conv}, we substitute the infinite set of parameters $I=[\alpha_0,\alpha_1]$ with a regular discretization $I^N$, and we address the reduced problem:
\begin{equation}\label{eq:def_qubit_fun_N}
\J^N(u):= \max_{\alpha\in I^N} \left(
1- \big|\langle \psi^{\mathrm{tar}} | \psi^{\alpha}_u(T) \rangle\big|^2
\right) + \gamma \| u \|_{L^2}^2\to \min.
\end{equation}
We employed a subgradient-like scheme for the minimization of $\J^N$ (outlined in Algorithm~\ref{alg:iter_PMP_subgrad}) consisting of the following main steps:
\begin{enumerate}
\item Given a control $u$, compute $\psi^\alpha_u$ for every $\alpha\in I^N$, and find a $\bar \alpha$ where the maximum in \eqref{eq:def_qubit_fun_N} is attained.
For the current choice of the control, such a value $\bar \alpha$ represents the \emph{worst-case}, i.e. the value of the parameter for which the end-point cost is the largest. 
\item Consider  $\J^{\bar \alpha}(u):= 
1- \big|\langle \psi^{\mathrm{tar}} | \psi^{\bar \alpha}_u(T) \rangle\big|^2
 + \gamma \| u \|_{L^2}^2$ and update $u$ by performing \emph{one iteration} of a PMP-based indirect method on $\J^{\bar \alpha}$. Then, return to Step~1.
\end{enumerate}
The statement of the PMP used for the implementation and related to the functional $\J^{\bar \alpha}$ (i.e., to a \emph{single system} of the ensemble) is detailed in \cite[Example~3]{BSS21}. 
We notice the peculiar aspect of the Hamiltonian of the system (appearing also at line~7 of Algorithm~\ref{alg:iter_PMP_subgrad}), where the imaginary part relates to the fact that the state variable $\psi_u^\alpha$ of the quantum system takes values in $\mathbb{C}^2$.

\begin{algorithm}
\KwData{ 
$I^N\subset I$
subset of parameters, $T>0$ (time horizon).\\
{\bf Algorithm setting:} 
$\tau_0 \gg 0 $, $\max_{\mathrm{iter}}\geq 1$,
$u\in L^2([0,T],\R)$ (initial guess).}

$\tau\gets \tau_0$\;
\For(\tcp*[f]{Iterations of the algorithm}){$n=1,\ldots,\max_{\mathrm{iter}}$ }{

	$\forall\, \alpha\in I^N$, compute the trajectory $\psi^\alpha_u$ solving \eqref{eq:qubit_sys} with the control $u$\;
Find $\bar \alpha\in I^N$ such that $\bar \alpha \in \arg \max_{\alpha\in I^N} \left(
1- \big|\langle \psi^{\mathrm{tar}} | \psi^{\alpha}_u(T) \rangle\big|^2 \right)$\;
		Compute the adjoint trajectory $\chi^{\bar \alpha}_u$ solving bacward the adjoint system of \eqref{eq:qubit_sys}\;
	Update the control by maximizing the augmented Hamiltonian: $u(t) \gets
\arg \max_{ v\in \R}
\left\{ \Im\big( \langle \chi^{\bar \alpha}_u(t) | H_1v |\psi^{\bar \alpha}_u(t) \rangle \big) 
- \frac{\gamma}{2}|v|^2
-\frac{\tau}{2}|v-u(t)|^2
\right\}$\;
$\tau\gets \tau_0 + n$\;
	}
\caption{Iterative Maximum Principle on the worst-case (subgradient-like)}
\label{alg:iter_PMP_subgrad}
\end{algorithm}

\begin{remark}\label{rmk:subgradient}
We refer to Algorithm~\ref{alg:iter_PMP_subgrad} as a `subgradient-like' scheme in analogy to the finite dimensional convex case. Namely, let $g_1,\ldots,g_N:\R^d\to\R$ be $C^1$-regular convex functions, and let us define $g(x):=\max_{i=1,\ldots,N}g_i(x)$ for every $x\in\R^d$. Then, at every point $x$, we have that $\partial g(x) = \mathrm{co}\{\nabla g_j(x): g_j(x)=g(x)\}$, so that the iterations $x_{n+1}=x_n - h_n\nabla g_{\hat i}(x_n)$ (with $\hat i$ such that $g_{\hat i}(x_n)=g(x_n)$ and $h_n>0$) result in a \emph{subgradient descent scheme}. 
We recall that a sufficient condition for having convergence requires the step-size to satisfy $h_n\to 0$ and $\sum_{n=1}^\infty h_n=\infty$ (see e.g. \cite[Theorem~32]{Shor98}).
\end{remark}

\begin{remark}\label{rmk:costate}
    In Line~6 of Algorithm~\ref{alg:iter_PMP_subgrad}, given the current guess $u$ and the forward path $ \psi^{\bar \alpha}_u$, we compute the adjoint trajectory by solving backward in time 
    \begin{equation}\label{eq:qubit_sys_adj}
    i\dot\chi_u^{\bar \alpha}(t) = \big(H_0^{\bar \alpha} + H_1 u(t) \big)\chi_u^{\bar \alpha}(t),
    \quad 
    \chi_u^{\bar \alpha}(T) = -2\langle \psi^{\mathrm{tar}} | \psi^{\bar \alpha}_u(T) \rangle \psi^{\mathrm{tar}},
\end{equation}
where we understand for convenience $\chi_u^{\bar \alpha}$ as a \emph{column} vector in $\mathbb{C}^2$. We refer to \cite[Example~3]{BSS21} for further details.
\end{remark}

\begin{remark}\label{rmk:stepsize}
In Line~7 of Algorithm~\ref{alg:iter_PMP_subgrad} we perform the update of the control by maximizing an \emph{augmented} Hamiltonian (see \cite{SS80}). Indeed, the extra term $-\frac\tau2|v-u(t)|^2_2$ penalizes large deviations from the control computed at the previous iteration, and the positive quantity $\tau$ plays the role of the \emph{inverse} of the descent step-size.
The fact that $\tau$ increases at each iteration (Line~8) is motivated by what is explained at the end of Remark~\ref{rmk:subgradient}.
\end{remark}

\subsection*{Experimental setting and results}
We adopted the same parameters chosen in \cite{RABS} for the illustrations, i.e., $E=1$, $\alpha_0=-0.5$ and $\alpha_1=0.5$, while in \eqref{eq:def_qubit_fun}-\eqref{eq:def_qubit_fun_N} we set $\gamma=2^{-4}$.
We approximated $I=[-0.5,0.5]$ with $I^N$, consisting of $N=101$ equi-spaced points at distance $0.01$.
We considered as time horizon $T=20$ and $T=50$, and in both cases we used a constant time-step discretization $\Delta t=2^{-5}$, and we took controls $u$ piecewise constant on each subinterval $[k\Delta t,(k+1)\Delta t)$.
We stress the fact that on each subinterval we evolved \eqref{eq:control_qubit} (and the adjoint equation as well) by computing the \emph{exact exponential} of the matrix $H_0^\alpha + H_1 u$.
We used as initial guess for Algorithm~\ref{alg:iter_PMP_subgrad} a control obtained after $400$ iterations of a PMP-based indirect scheme applied to the \emph{averaged} problem
\begin{equation} \label{eq:def_qubit_fun_av}
\J^{\mathrm{av}}(u) := \frac1N \sum_{\alpha\in I^N}
\left(
1- \big|\langle \psi^{\mathrm{tar}} | \psi^{\alpha}_u(T) \rangle\big|^2
\right) + \gamma \| u \|_{L^2}^2\to \min,
\end{equation}
where every element of $I^N$ was assigned the same weight $1/N$. In this way, we can understand the final output of Algorithm~\ref{alg:iter_PMP_subgrad} as an adjustment of an averaged ensemble optimal control problem. Finally, we run Algorithm~\ref{alg:iter_PMP_subgrad} for $1000$ iterations with $\tau_0 = 8$.

\begin{remark}
The scheme used for $\J^{\mathrm{av}}$ is extremely similar to Algorithm~\ref{alg:iter_PMP_subgrad}. The main differences are that the adjoint trajectories $\chi^\alpha_u$ needs to be computed for every $\alpha\in I^N$, and that the maximization of the averaged (augmented) Hamiltonian reads as 
$u(t) \gets
\arg \max_{ v\in \R}
\left\{ 
\frac1N \sum_{\alpha \in I^N}
\Im\big( \langle \chi^{ \alpha}_u(t) | H_1v |\psi^{ \alpha}_u(t) \rangle \big) 
- \gamma|v|^2_2
-\frac{\tau}{2}|v-u(t)|^2_2
\right\}$.
Finally, we used $\tau= 4$ at every iteration.
\end{remark}

The results are reported in Figure~\ref{fig:results}.
On the one hand, we observe that the controls constructed according to \eqref{eq:control_qubit} (blue) and by minimizing the averaged cost \eqref{eq:def_qubit_fun_av} (red) exhibit a performance deterioration when the parameter $\alpha$ is close to the extreme of the interval $[\alpha_0, \alpha_1]$, as we can see in the graphs on the left hand-side column. 
On the other hand, when considering the solution of \eqref{eq:def_qubit_fun_N} computed with Algorithm~\ref{alg:iter_PMP_subgrad} (yellow), we observe that the quantity $\big|\langle \psi^{\mathrm{tar}} | \psi^{\alpha}_u(T) \rangle\big|$ shows less pronounced oscillations as $\alpha$ ranges in its domain.
Moreover, we notice that the controls computed by minimization of \eqref{eq:def_qubit_fun_N} and \eqref{eq:def_qubit_fun_av} have lower energy (in terms of the $L^2$-norm) than the ones obtained through \eqref{eq:control_qubit}.\\
Finally, we report in Table~\ref{table:table_20} and Table~\ref{table:table_50} the comparison of the performances of approximated optimal controls, obtained by constructing $I^N$ with $N=26, 51, 101$ equi-spaced points, for $T=20$ and $T=50$, respectively. 
The computation of $\max_\alpha$ (the quantity of interest) and $\min_\alpha$ is carried out by evolving a \emph{discrete test ensemble} consisting of $1001$ equi-spaced points on $I$.
We observe that, for $T=20$, the values of $\max_\alpha$ are relatively close (the largest is $1.40\%$ larger than the smallest).
For $T=50$, we actually observe that the best performing control in terms of $\max_\alpha$ is the one corresponding to $N=101$, and the relative distance between the smallest and the largest is $13.0\%$.

\begin{table}
\scriptsize
\centering
$T=20$
\vspace{2pt}

\begin{tabular}{|c c c c|}
\hline
$N$ & $\max_{\alpha} \left(1- \big|\langle \psi^{\mathrm{tar}} | \psi^{\alpha}_u(T) \rangle\big|\right)$ & $\min_{\alpha} \left(1- \big|\langle \psi^{\mathrm{tar}} | \psi^{\alpha}_u(T) \rangle\big|\right)$ & $\|u\|_{L^2}^2$\\ [0.6ex]
\hline
$26$ & $0.0498$ & $0.0283$ & $1.7162$\\
\hline
$51$ & $0.0500$ & $0.0270$ & $1.7098$\\
\hline
$101$ & $0.0505$ & $0.0264$ & $1.7058$ \\
\hline
\end{tabular}
\vspace{3pt}
\caption{}\label{table:table_20}
\end{table}

\begin{table}
\scriptsize
\centering
$T=50$
\vspace{2pt}

\begin{tabular}{|c c c c|}
\hline
$N$ & $\max_{\alpha} \left(1- \big|\langle \psi^{\mathrm{tar}} | \psi^{\alpha}_u(T) \rangle\big|\right)$ & $\min_{\alpha} \left(1- \big|\langle \psi^{\mathrm{tar}} | \psi^{\alpha}_u(T) \rangle\big|\right)$ & $\|u\|_{L^2}^2$\\ [0.6ex]
\hline
$26$ & $0.0476$ & $0.0191$ & $1.7622$\\
\hline
$51$ & $0.0449$ & $0.0128$ & $1.7595$\\
\hline
$101$ & $0.0421$ & $0.0164$ & $1.7652$ \\
\hline
\end{tabular}
\vspace{3pt}
\caption{}\label{table:table_50}
\end{table}

\begin{figure}
\centering
\includegraphics[scale=0.3]{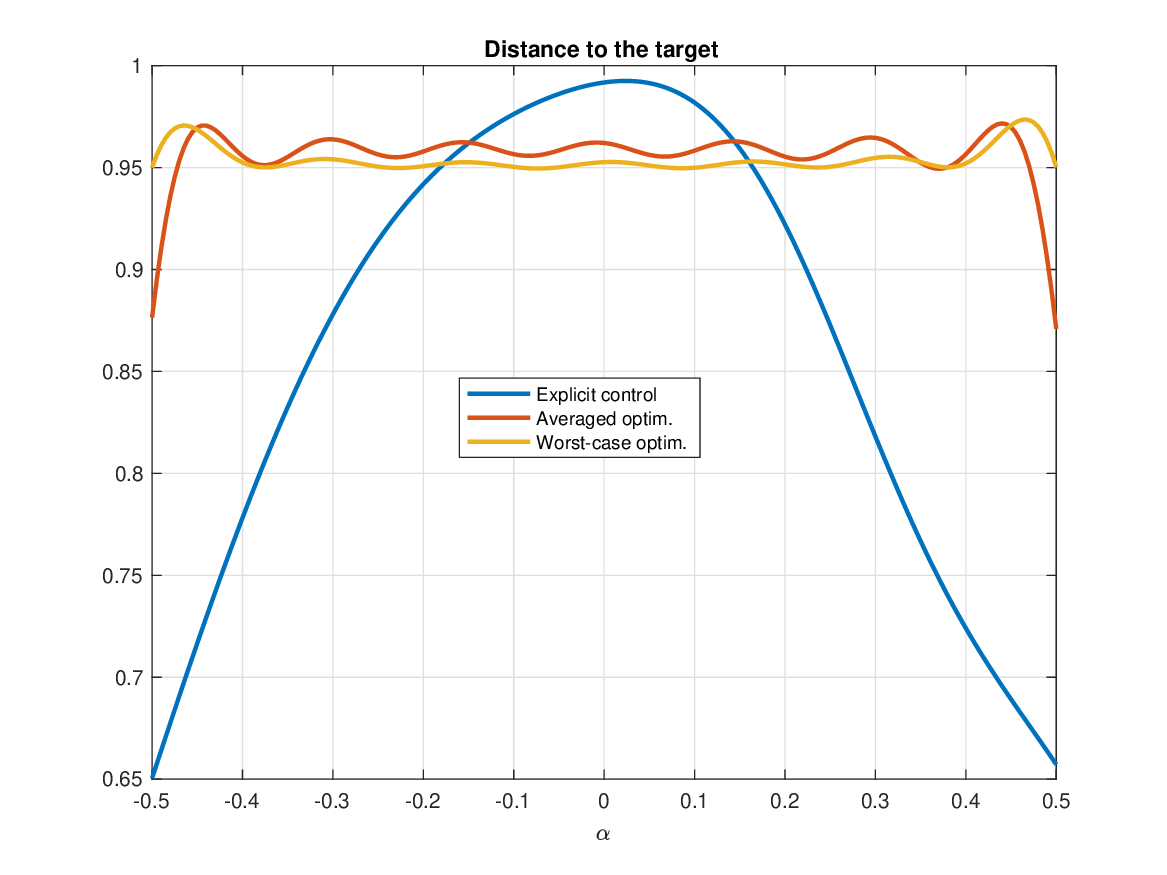}
\includegraphics[scale=0.3]{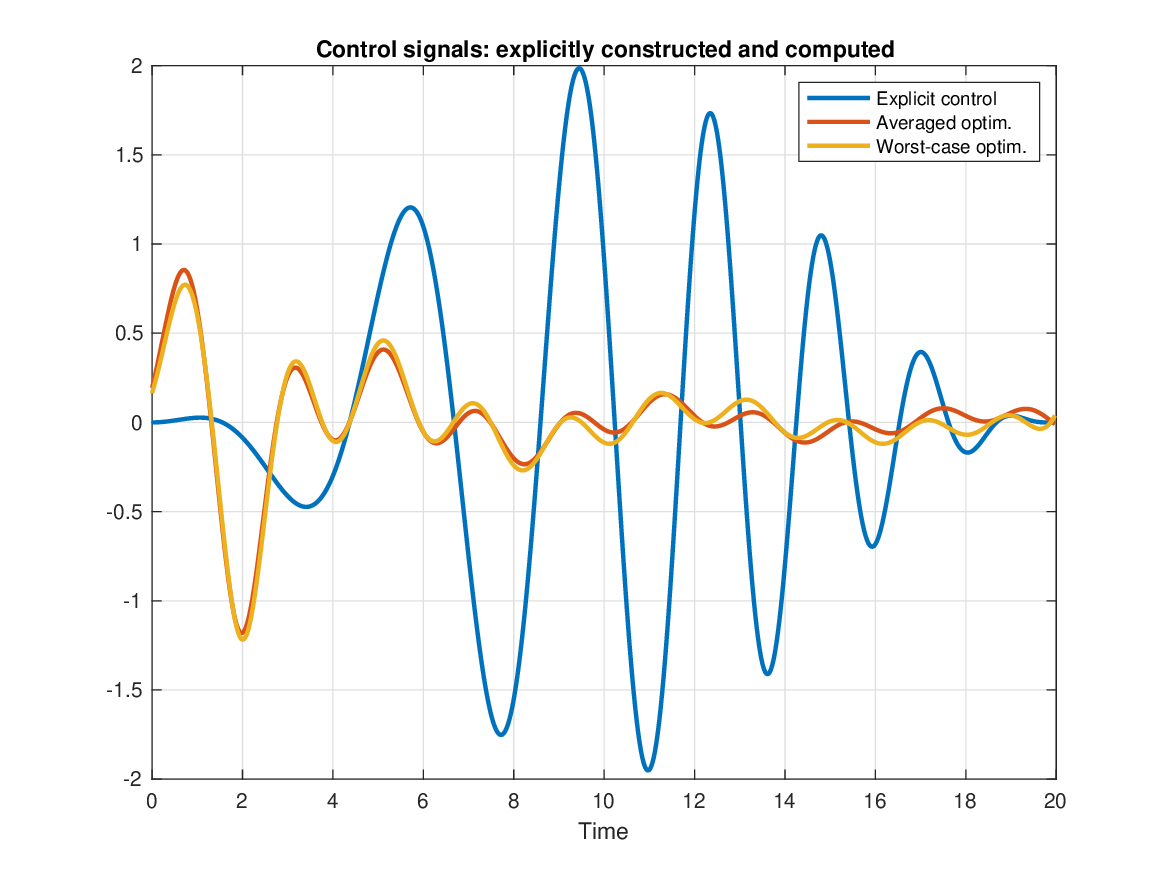}\\
 \includegraphics[scale=0.3]{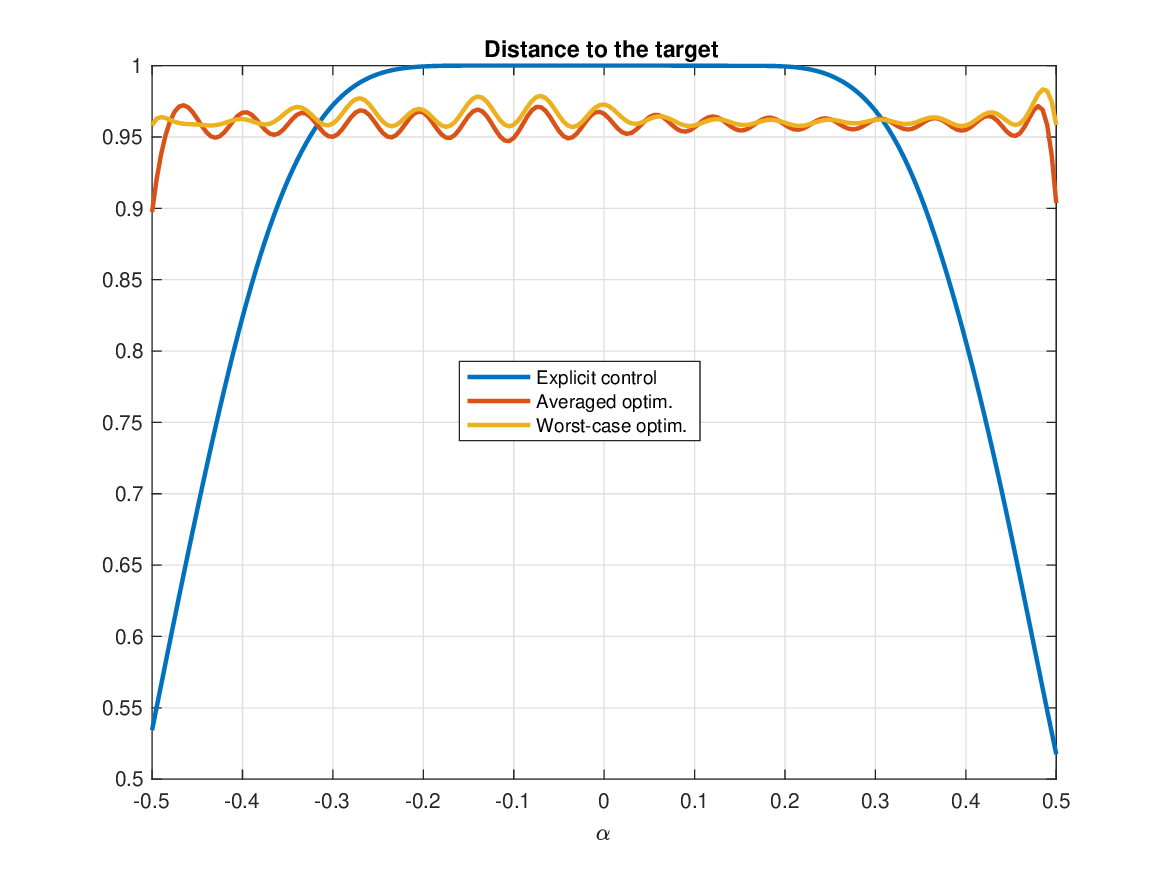}
 \includegraphics[scale=0.3]{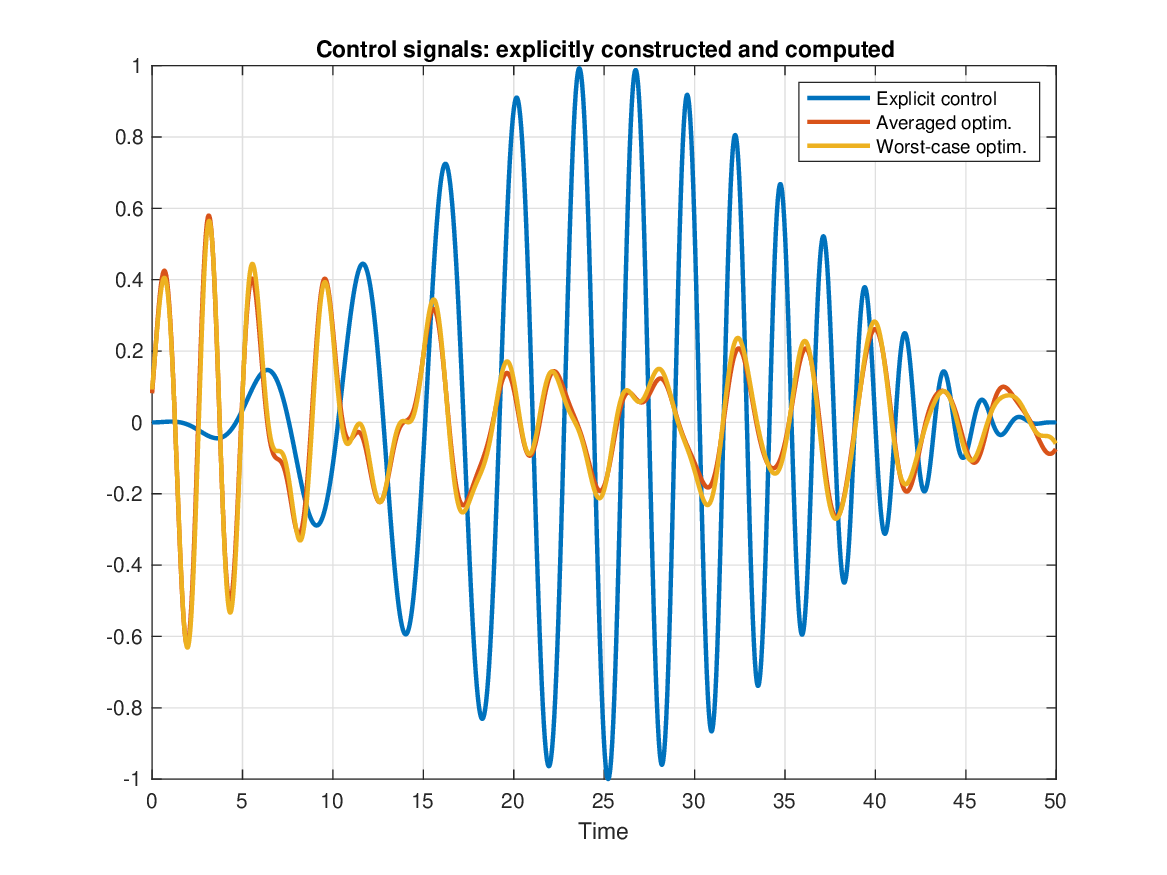}
\caption{We compared the results obtained by minimizing numerically \eqref{eq:def_qubit_fun_av} (red), \eqref{eq:def_qubit_fun_N} (yellow), and by considering the control prescribed by \eqref{eq:control_qubit} (blue). The left plots represent $\big|\langle \psi^{\mathrm{tar}} | \psi^{\alpha}_u(T) \rangle\big|$ (desirably, as close as possible to $1$) as $\alpha$ ranges in $[-0.5,0.5]$, while at the right-hand side we reported the graphs of the control used to drive the ensemble. In the pictures above, we set $T=20$, and $\e_1=0.5, \e_2=0.1$, while below we set $T=50$, $\e_1=0.25, \e_2=0.08$.}
\label{fig:results}
\end{figure}

\section*{Conclusions}

In this paper, we considered minimax optimal control problems involving ensembles of control-affine systems.
Using tools from Calculus of Variations, we proved the existence of solutions for the optimization problem, and we established their stability when the sets that parametrize the ensembles are converging in the Hausdorff distance.
The latter property (formulated as a $\Gamma$-convergence result) enabled us to establish the PMP when the ensemble consists of infinitely many systems. \\
Finally, we proposed an empirical scheme for the numerical resolution of this kind of problems, and we tested it in the framework described in \cite{RABS}.

\subsection*{Acknowledgments}
The Author wants to thank Dr. Nicolas Augier for the fruitful discussions that motivated the investigations in the present paper.
Moreover, we want to thank two anonymous Referees for their constructive comments and observations.
The Author acknowledges partial support from INdAM-GNAMPA.

\vspace{1cm}

\noindent

\begin{small}
\noindent
(A. Scagliotti).

\vspace{5pt} 
\noindent
\textsc{``School of Computation, Information and Technology'',\\ Technical University of Munich, Garching b. M\"unchen, Germany.}
\vspace{3pt}

\noindent
\textsc{Munich Center for Machine Learning (MCML), Germany.}

\vspace{5pt} 
\noindent
\textit{Email address:} \texttt{scag -at- ma.tum.de}
\end{small}

\end{document}